\newtheorem{theorem}{Theorem}[section]
\newtheorem{lemma}[theorem]{Lemma}
\newtheorem{proposition}[theorem]{Proposition}
\newtheorem{corollary}[theorem]{Corollary}
\newtheorem{definition}[theorem]{Definition}
\newtheorem{remark}[theorem]{Remark}
\numberwithin{equation}{section}
\newcommand{\mc}[1]{{\mathcal #1}}
\newcommand{\bb}[1]{{\mathbb #1}}
\newcounter{as}[section]
\renewcommand{\>}{\rangle}
\begin{document}

\title{Additive functionals of exclusion processes from non-equilibrium }

\author{Luiz Renato Fontes}

\address{\noindent IME-USP, Rua do Mat\~ao 1010, CEP 05508-090, S\~ao Paulo, Brazil.
  \newline e-mail: \rm \texttt{lrfontes@usp.br} }

\author{Tiecheng Xu}

\address{\noindent IME-UFBA, Av.Milton Santos, s/n, Campus Universitario de Ondina, CEP 40170-110, Salvador, Brazil.
  \newline e-mail: \rm \texttt{xutcmath@gmail.com} }

\begin{abstract}
Consider the weakly asymmetric simple exclusion processes on the one-dimensional torus. We study the non-equilibrium fluctuation of a class of additive functionals, and show that its scaling limit  is a Gaussian process. The proof is mainly based on the results obtained and techniques developed by Jara and Menezes [Non-equiliburim fluctuations of interacting particle systems, Preprint].
\end{abstract}
\keywords{Non-equilibrium fluctuations, occupation time, exclusion processes}

\maketitle 
\section{Introduction}
 Given a Markov process $\{\eta_t:t\geq 0\}$ with state space $\Omega$, and a proper function $f:\Omega\to\bb R$, one would like to understand the long-time behavior of 
$$\Sigma(t)\,:=\,\int_0^tf(\eta_s)ds$$ 
under a suitable space-time rescaling. Since the seminal work of Kipnis and Varadhan \cite{kv}, plenty of results has been obtained on the additive functionals of Markov processes, especially of a particular class of Markov processes, the interacting particle systems. For the interacting particle systems on the one-dimensional lattice, we refer to \cite{s00}\cite{s03}\cite{b04}\cite{fgn14} on the scaling limits of the additive functionals of exclusion processes of various cases, and refer to \cite{qjs02} for those of zero range processes. We also point out a landmark work \cite{gj13} of Gon\c calves and Jara, where the authors obtained the Central Limit Theorem of a quite general class of additive functionals of one-dimensional, conservative, stationary interacting particle systems. 

To the best of our knowledge, all the previous results in this topic are obtained under the assumption that the interacting particle system starts from its invariant measure. The purpose of this work is to present the first result on the Central Limit Theorem of the additive functionals of non-equilibrium systems.  

The interacting particle systems that we work on are a sequence of spatially inhomogeneous, weakly asymmetric simple exclusion processes on the one-dimensional discrete torus $\bb T_n$ with $n$ sites. The dynamics of these processes can be informally described as follows. Fix a smooth function $F$ defined on the one-dimensional continuous torus. The particle at site $x\in\bb T_n$ attempts to jump to the right neighbour site $x+1$  with rate $1+\frac{1}{n}F(\frac{x}{n})$, and attempts to jump to the left neighbour site $x-1$ with  rate $1-\frac{1}{n}F(\frac{x-1}{n})$. The system obeys the exclusion rule, namely, at most one particle is allowed at each site. When the destination site of the attempted jump is occupied, the jump is suppressed.

Let us denote this weakly asymmetric simple exclusion processes on $\bb T_n$ by $\{\eta_t: t\geq 0\}$. The system is taken to start from a  non-equilibrium state, which consists of a Bernoulli product measure with a smooth profile. Fix a local function $h$ with finite support and a smooth function $q_{\cdot}:[0,T]\to\bb R_+$ that is bounded away from zero.  Let $\varphi_h(\beta)$ be the expectation of $h$ with respect to Bernoulli product measure with constant density $\beta$. In this work we prove that the limit as $n\to\infty$ of the following process 
\begin{equation}\label{1time}
\Gamma^h_n(t)\,:=\,\sqrt{n}\int_0^{t}\frac{h(\eta_{sn^2})-\varphi_h(\rho_s(0))}{q_s}ds,\,\, 0\leq t\leq T
\end{equation}
is a Gaussian process,
where $\rho_s$ is the solution of the corresponding hydrodynamic limit equation obtained under the diffusive time scale: 
$$\partial_s\rho=\Delta \rho\,-\,2\nabla\big\{\rho(1-\rho)F\big\}.$$
The dependence of the limit process on the local function $h$ is also characterized. 

The main idea of our proof is to follow the approach adopted in \cite{gj13}, which can be roughly described as follows. Once the limit of the denstiy fluctuation field is derived, one then needs to choose a proper test function for the density field such that $\Gamma_n^h$ can be well approximated by the density field acting on this test function. In this way we can capture the feature of $\Gamma_n^h$ through the existing thorough understanding of the density fluctuation field. 

With regards to the non-equilibrium fluctuations of the density field, in general it is very difficult to derive and has been one of the main open problems in statistical mechanics. It was just recently that Jara and Menezes \cite{jm} made a major breakthrough by introducing a robust method, which is a refinement of relative entropy method of Yau\cite{y91},  to derive the non-equilibrium fluctuations of the interacting particle systems. To avoid repeating their arguments to obtain the limit of  non-equilibrium fluctuations of the density field,  in the present work we choose to work on the same model studied in \cite{jm}.  An intesting feature of this model is that, as quoted in \cite{jm},`` this model does not have known explicit invariant measures, and therefore not even in the equilibrium case this model is tractable by previous methods. "

The approximation of $\Gamma_n^h$ is realized through a local replacement lemma(see Lemma \ref{ZG}). A typical local replacement lemma tells that  a microscopic centered variable in the form of
 $$\overline{\eta}_{sn^2}(x):=\eta_{sn^2}(x)-\rho_s(x/n)$$
 can be replaced by a macroscopic variable, with a negligible cost after taking time integral. Nevertheless there are several obstacles to overcome for a suitble local replacement lemma meeting our needs. 

The first one is the choice of the macroscopic variable in the non-equilibrium setting. Let us  first consider the simple case that $h(\eta_{sn^2})$ is chosen to be the centered variable $\overline{\eta}_{sn^2}(0)$. In the equilibrium particle systems, an usual choice of the macroscopic variable is the  average of $\overline{\eta}_{sn^2}(x)$ in a box $(0,\varepsilon n]\cap\bb Z$:
$$\frac{1}{\varepsilon n}\sum_{x=1}^{\varepsilon n}\overline{\eta}_{sn^2}(x).$$
However it is not clear how to obtain a good enough bound on the cost of doing such a replacement in the non-equilibrium setting. We find out that a more practicable macroscopic variable to replace $\overline{\eta}_{sn^2}(0)$ is
$$\frac{\rho_s(0)[1-\rho_s(0)]}{\varepsilon n}\sum_{x=1}^{\varepsilon n}\frac{\overline{\eta}_{sn^2}(x)}{\rho_s(x/n)[1-\rho_s(x/n)]}.$$
Consider now a general local function $h$. Gon\c calves and Jara\cite{gj13} use the equivalence of ensembles and spectral gap tools to find out the appropriate macroscopic variable to perform replacement, if the particle system starts from its stationary product measure.  However it is not clear how to apply their strategy for particle systems without stationary product measures or starting from non-stationary states. Our idea is to decompose the local function $h$ into the linear sum of basis of different degrees. A basis of degree one is of the form $\overline{\eta}(x)$, for which we have just discussed how to perform the replacement. Basis of degree at least two are of the forms $\overline{\eta}(0)\overline{\eta}(1), \overline{\eta}(0)\overline{\eta}(1)\overline{\eta}(2)$, etc. These basis of high orders will be shown to contribute nothing in the replacement, under the space-time scaling as given in \eqref{1time}. In this way we demonstrate the macroscopic variable to replace $h(\eta_{sn^2})$ totally comes from contributions of the degree one basis in the decomposition.  

The second one is under which norm to estimate the cost. In the equilibrium systems, $L^2$ norm is an obvious choice because of the famous Kipnis-Varadhan inequality. In the non-equilibrium systems, a Kipnis-Varadhan type inequality was recently discovered in \cite{efx}, which works well for the degree one basis, i.e. the case $h(\eta)=\overline{\eta}(x)$. A generalization of the non-equilibrium Kipnis-Varadhan type inequality for a general local function $h$ seems to be out of reach for the moment. Inspired by the techniques developed in \cite{jm}\cite{jm1}, instead of estimating $L^2$ norm, we provide a convenient mechanism to obtain a $L^\lambda$ estimate on the cost of doing replacement for a general local function $h$, for any $\lambda\in(1,2)$. 

The paper is organized in the following way. In Section \ref{sec2} we define our model, recall the hydrodynamic limit and fluctuations of the density obtained in \cite{jm}, and state our main results about scaling limits of additive functionals. In Section \ref{sec3} we  prove the key result of our paper, the local replacement lemma out of equilibrium. Using the local replacement lemma, we prove in Section \ref{sec4} the tightness of three sequences of processes: the first one is used to characterize the limiting process, the second one is an error process which will be shown to converge to a zero process in the limit,  the third one is the sequence $\{\Gamma^h_n(t): t\in[0,T]\}_{n\in\bb N}$.  In Section \ref{sec5} we prove the theorems stated in Section \ref{sec2}. In the Appendix we present
some properties of subgaussian random variables.
\section{The Model and Main results}\label{sec2}
\subsection{WASEP}

For each $n\in\bb N$, denote by $\bb T_n=\bb Z/n\bb Z$ the one-dimensional discrete torus. Consider the exclusion process on $\bb T_n$, denoted by $\{\eta_t: t\geq 0\}$, whose generator $L_n$ acting on a function $f:\{0,1\}^{\bb T_n}\to\bb R$ is given by
$$L_n f(\eta)\,=\,\sum_{x,y\in\bb T_n}r_n(x,y)\eta(x)(1-\eta(y)) [f(\eta^{x,y})\,-\,f(\eta)]$$
In this formula and below, the configurations are represented by the Greek letters $\eta$, $\xi$, so that $\eta(x)=1$ if site $x\in \bb T_n$ is occupied for the configuration $\eta$ and $\eta(x)=0$ otherwise. The symbol $\eta^{x,y}$ represents the configuration obtained from $\eta$ by switching the occupation of sites $x$ and $y$:
\begin{equation*}
(\eta^{x,y})(z)=
\begin{cases}
\eta(y) & \mbox{ if } z=x\\
\eta(x) & \mbox{ if } z=y\\
\eta(z) & \mbox{ if } z\neq x,y\;. 
\end{cases}
\end{equation*} 

Denote the state space $\{0,1\}^{\bb T_n}$ by $\Omega_n$. Let $\bb T=\bb R/\bb Z$ and fix a smooth function $F:\bb T\to\bb R$. We choose the jump rate function $r_n:\bb T_n\times \bb T_n\to\bb R_{\geq 0}$ such that $r_n(x,x+1)=1+ \frac{1}{n}F(\frac{x}{n})$, $r_n(x+1,x)=1- \frac{1}{n}F(\frac{x}{n})$ and $r_n(x,y)=0$ if $\lvert x-y\rvert>1$. In order to guarantee that $r_n$ is non-negative, since $F$ is a bounded function, we assume that $n$ is sufficiently large. This process is called the \textit{weakly asymmetric exclusion process}(WASEP).  In particular if $F\equiv 0$, the process $\{\eta_t^n: t\geq 0\}$ becomes the \textit{simple symmetric exclusion process}(SSEP). 

\subsection{Scaling limits}

Consider an integrable function with respect to the Lebesgue measure $\rho(\cdot):\bb T\to\bb R_+$ . Denote by $\nu_{\rho(\cdot)}^n$ the \textit{Bernoulli product measure with slowly varying parameter} associated to the profile $\rho(\cdot)$:
\begin{equation}\label{defnu}
\nu_{\rho(\cdot)}^n\{\eta:\eta(x)=1\}\,=\,\rho(x/n), \,\,\,\,x\in  \bb T_n.
\end{equation} Throughout this article, we  shall fix a smooth function $\rho_0$ such that there exists $\varepsilon_0\in(0,1/2)$ such that $\rho_0(u)\in (\varepsilon_0,1-\varepsilon_0)$ for every $u\in\bb T$. This function $\rho_0$ will be used as the initial profile of our model.

Given two probability measures $\mu$ and $\pi$ on the same probability space $E$ such that: $\mu$ is absolutely continuous with respect to $\pi$, the relative entropy of $\mu$ with respect to $\pi$ is defined by 
$$H(\mu|\pi)\,=\,\int_E\frac{d\mu}{d\pi}\log\frac{d\mu}{d\pi}d\pi,$$
where $\frac{d\mu}{d\pi}$ represents the Radon-Nikodym derivative of $\mu$ with respect to $\pi$.

 Let $\mc M$ be the space of positive measures on $\bb T$ with total mass bounded by one, endowed with the weak topology. Let
$\pi^{n}_{t} \in \mc M$ be the empirical measure on $\bb T$ obtained by rescaling time by $n^2$, rescaling space by $n^{-1}$, and assigning mass $n^{-1}$ to each particle, i.e.,
\begin{equation}\label{f01}
\pi^{n}_{t}(\eta,du) \;=\; \frac{1}{n} \sum _{x\in \bb T_n} \eta_{tn^2} (x)\,
\delta_{x/n}(du)\,,
\end{equation}
where $\delta_u$ is the Dirac measure concentrated on $u$. Let $D(\bb R_+, \Omega_n)$ be the path space of c\`adl\`ag trajectories with values in $\Omega_n$. Given a measure $\mu_n$ on $\Omega_n$, denote by $\bb P_{\mu_n}$( resp. $\bb E_{\mu_n}$) the probability measure(resp. expectation) on $D(\bb R_+, \Omega_n)$ induced by the
initial state $\mu_n$ and the Markov process $\{\eta_t : t\ge 0\}$. 

Suppose that the process $\{\eta_t^n: t\geq 0 \}$ starts from an initial distribution $\mu^n$. We shall assume $\mu^n$ is close to the Bernoulli product measure $\nu^n_{\rho_0(\cdot)}$ in the sense of relative entropy. To simplify the notation, we write
$$H_n(0)\,=\,H(\mu^n\mid \nu^n_{\rho_0(\cdot)}).$$ In order to observe the evolution of the density in the macroscopic level,  the process has to be accelerated by $n^2$.  The next theorem on the hydrodynamic limit of WASEP is a well known result. See Proposition 2.1 of \cite{jm} for instance.
\begin{theorem}\label{hdl}
Assume that $H_n(0)=o(n)$.
Then for every $t\in [0,T]$, every continuous function $f:\bb T\to\bb R$ and every $\delta>0$,
$$\lim_{n\to\infty}\bb P_{\mu^n}\Big\{\eta\,:\,\Big|\frac{1}{n}\sum_{x\in\bb T_n}f\big(\frac{x}{n}\big)\eta_{tn^2}(x)\,-\,\int_{\bb T}f(u)\rho_t(u)du\Big|>\delta\Big\}\,=\,0,$$
where $\rho_t(u)$ is the unique solution of the heat equation 
\begin{equation}\label{pde}
\begin{cases}
\partial_t\rho_t(u)=\partial_{uu}^2 \rho_t(u)\,-\,2\partial_u\big\{\rho_t(u)(1-\rho_t(u))F(u)\big\}, \quad u\in \bb T \\
\rho(0,\cdot)=\rho_0(\cdot) \,
\end{cases}
\end{equation}

\end{theorem}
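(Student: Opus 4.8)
The plan is to run the relative entropy method of Guo--Papanicolaou--Varadhan together with Yau's inequality, exactly as in \cite{jm}, so I shall only describe the structure. For $t\in[0,T]$ let $\mu^n_t$ denote the law of $\eta_{tn^2}$ under $\bb P_{\mu^n}$, and let $\rho_t$ be the unique solution of \eqref{pde}; by the maximum principle $\rho_t$ stays smooth and bounded inside some interval $(\varepsilon_1,1-\varepsilon_1)$, uniformly on $[0,T]$, so the Bernoulli product measure $\nu^n_t:=\nu^n_{\rho_t(\cdot)}$ is well defined and non-degenerate. Set $H_n(t):=H(\mu^n_t\mid\nu^n_t)$ and let $f^n_t=d\mu^n_t/d\nu^n_t$ be the associated density.

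First I would establish the basic entropy-production inequality: differentiating $H_n(t)$ in $t$, using that $\mu^n_t$ solves the forward equation $\partial_t\mu^n_t=n^2L_n^*\mu^n_t$ and that $\nu^n_t$ is a product measure whose parameter $\rho_t(x/n)$ evolves, up to the relevant order, by the discretization of \eqref{pde}, one arrives at a bound
\begin{equation*}
\partial_t H_n(t)\;\le\;-\,n^2\,\mc D_n\big(\sqrt{f^n_t}\,;\,\nu^n_t\big)\;+\;\int_{\Omega_n}V^n_t\,f^n_t\,d\nu^n_t,
\end{equation*}
where $\mc D_n$ is the (suitably normalized) Dirichlet form of $L_n$ and the ``potential'' $V^n_t$ is an explicit local function. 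The point of choosing $\rho_t$ to solve \eqref{pde} is precisely that it annihilates the $O(n)$ contribution of $V^n_t$; here the weakly asymmetric, spatially inhomogeneous rates $r_n(x,x\pm1)=1\pm\frac1n F(x/n)$ reproduce, after Taylor expansion, the nonlinear drift $-2\partial_u\{\rho(1-\rho)F\}$. The residual $V^n_t$ is then closed by the one- and two-block estimates, replacing local averages of $\eta$ by functions of the empirical density at the cost of the Dirichlet form (absorbed by the $-n^2\mc D_n$ term), together with the elementary entropy inequality $\int g f\,d\nu\le\gamma^{-1}H(f\nu\mid\nu)+\gamma^{-1}\log\int e^{\gamma g}d\nu$. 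This yields $\partial_t H_n(t)\le C\,H_n(t)+o(n)$ uniformly on $[0,T]$, and since $H_n(0)=o(n)$ by hypothesis, Gr\"onwall's lemma gives $H_n(t)=o(n)$ for every $t\in[0,T]$.

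Finally I would deduce the stated law of large numbers. Fix a continuous $f:\bb T\to\bb R$ and $\delta>0$, and write $g^n(\eta)=\frac1n\sum_{x\in\bb T_n}f(x/n)\big(\eta(x)-\rho_t(x/n)\big)$. The entropy inequality applied to the event $A_n=\{|g^n|>\delta\}$ gives
\begin{equation*}
\bb P_{\mu^n}\!\big\{\,|g^n(\eta_{tn^2})|>\delta\,\big\}\;\le\;\frac{\log 2+H_n(t)}{\,\log\big(1+1/\nu^n_t(A_n)\big)\,},
\end{equation*}
and since under the product measure $\nu^n_t$ the variable $g^n$ has mean $o(1)$ and variance $O(1/n)$, an exponential Chebyshev bound yields $\nu^n_t(A_n)\le e^{-c(\delta)n}$; combined with $H_n(t)=o(n)$ the right-hand side tends to $0$. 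Replacing $\frac1n\sum_x f(x/n)\rho_t(x/n)$ by $\int_{\bb T}f\rho_t\,du$ costs only $o(1)$ by Riemann-sum convergence, which completes the proof.

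I expect the delicate point to be the entropy-production step, specifically checking that \eqref{pde} is exactly the equation that cancels the $O(n)$ part of $V^n_t$ and that the remainder is closable with the available Dirichlet-form budget; carrying the inhomogeneous weak drift $\frac1n F(x/n)$ correctly through the Taylor expansion and the block estimates is the main technical bookkeeping. Everything else is by now standard and is done in detail in \cite{jm}.
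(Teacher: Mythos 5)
The paper gives no proof of this theorem: it is stated as a known result with a pointer to Proposition 2.1 of \cite{jm}, and your sketch follows precisely the route of that reference, namely Yau's relative entropy method (entropy production inequality, cancellation of the $O(n)$ contribution by choosing $\rho_t$ to solve \eqref{pde}, Gr\"onwall to get $H_n(t)=o(n)$, then the entropy inequality combined with an exponential concentration bound under the product measure $\nu^n_{\rho_t(\cdot)}$ to conclude the law of large numbers). Your outline is correct; the only cosmetic remarks are that what you describe is Yau's relative entropy method rather than the GPV entropy-production scheme, and that closing the remainder requires only a one-block--type replacement (handled in \cite{jm} by concentration under the product reference measure), not the two-block estimate.
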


The theorem above can be interpreted as the law of large numbers of the empirical measure $\pi^n$. 
The next result is the associated central limit theorem, which is the non-equilibrium fluctuation of the density field.  Let us start by introducing the density fluctuation field. The density  fluctuation field $X^n$ is given by 
\begin{equation}\label{dff}
X_t^n(f)\,:=\,\frac{1}{\sqrt{n}}\sum_{x\in\bb T_n}\big[\eta_{tn^2}(x)\,-\,\rho_t\big(\frac{x}{n}\big)\big]f(\frac{x}{n})
\end{equation}
for every test function $f\in C^{\infty}(\bb T)$ and $t\geq 0$.

We now define the space where field $X_t^n$ lives. Given an integer $k\in \bb Z$, denote by $H_{k}(\bb T)$ the Sobolev space:
$$H_k(\bb T)\,:=\,\Big\{f\in L^2(\bb T): \,\sum_{n\in \bb Z}(1+n^2)^k\lvert \widehat{f}(n)\rvert^2<\infty\Big\}$$
where $\widehat{f}$ is the Fourier series of $f$. The norm $\|\cdot\|_k$ of $H_k(\bb T)$ is naturally given by
$$\|f\|_k\,=\, \sum_{n\in \bb Z}(1+n^2)^k\lvert \widehat{f}(n)\rvert^2.$$
It is shown in \cite{jm} that the process $\{X_t^n, t\geq 0\}$ takes value in $H_{-k}(\bb T)$ for any $k>1/2$. The next theorem gives the limit of the density fluctuation field $X^n$ as $n\to\infty$.
\begin{theorem}\label{CLT}[Theorem 2.4 and 7.1 in \cite{jm}]
Fix  $k>3$ and assume $H_n(0)=O(1)$. Assume furthermore that the random variable $X_0$ takes value in $H_{-k}(\bb T)$ and $X_0^n$ converges to $X_0$ in law with respect to the topology of $H_{-k}(\bb T)$. Then  $\{X_t^n, 0\leq t\leq T\}$ converges in law to the process $\{X_t, 0\leq t\leq T\}$, which is the solution of the stochastic heat equation
\begin{equation}\label{spde}
\partial_t X_t\,=\, \nabla\Big( \nabla X_t\,-\,2X_t(1-\rho_t)F\,+\,\sqrt{2\rho_t(1-\rho_t)}\dot{W}_t\Big)
\end{equation}
with initial condition $X_0$, where $\dot{W}_t$ is a space-time one-dimensional white noise.
\end{theorem}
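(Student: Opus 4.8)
The plan is the classical three-step scheme: (i) characterise every subsequential limit of $\{X^n_\cdot\}_{n}$ as a solution of the martingale problem associated with \eqref{spde}; (ii) prove tightness of $\{X^n_\cdot\}_n$ in $C([0,T],H_{-k}(\bb T))$; and (iii) invoke well-posedness of the martingale problem for the linear (Ornstein--Uhlenbeck-type) equation \eqref{spde}. The entry point is Dynkin's formula: for $f\in C^\infty(\bb T)$,
\begin{equation*}
M^n_t(f)\;=\;X^n_t(f)\,-\,X^n_0(f)\,-\,\int_0^t\big(\partial_s+n^2L_n\big)X^n_s(f)\,ds
\end{equation*}
is a martingale with predictable quadratic variation $\<M^n(f)\>_t=\int_0^t\Gamma^n_s(f)\,ds$ read off from the generator; here $\partial_s$ differentiates the profile $\rho_s(x/n)$ appearing in \eqref{dff}.

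The first task is to expand $(\partial_s+n^2L_n)X^n_s(f)$. Writing the WASEP current across each bond $(x,x+1)$ and summing by parts twice, the symmetric part of the rates produces $X^n_s(\Delta_n f)$ with $\Delta_n$ the discrete Laplacian, and $X^n_s(\Delta_n f)\to X^n_s(\Delta f)$; the weakly asymmetric part, of relative size $n^{-1}F(x/n)$, is amplified by $n^2$ and, after one more summation by parts, contributes a term built from the local function $\eta(x)(1-\eta(x+1))-\rho_s(x/n)(1-\rho_s(x/n))$ tested against $-2(\nabla_n f)(x/n)F(x/n)$, plus lower-order pieces. The purely deterministic part of the expansion cancels against $\partial_s\rho_s$ precisely because $\rho_s$ solves \eqref{pde}; what remains still carries the nonlinear local function $\eta(x)\eta(x+1)$.

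The crux --- the step I expect to be the main obstacle --- is a \emph{second-order Boltzmann--Gibbs principle out of equilibrium}: inside the time integral one must replace $\eta_{sn^2}(x)\eta_{sn^2}(x+1)-\rho_s(x/n)\rho_s((x+1)/n)$ by its linearisation $2\rho_s(x/n)\big(\eta_{sn^2}(x)-\rho_s(x/n)\big)$ up to an error vanishing in $L^2(\bb P_{\mu^n})$ after time integration. In the stationary setting this rests on the Kipnis--Varadhan inequality and the spectral gap, both unavailable here; the substitute is the refined relative entropy estimate of \cite{jm}, i.e. an $O(1)$ bound on the entropy of the law of $\eta_{sn^2}$ relative to a (slightly corrected) product measure with profile $\rho_s$, which through the entropy inequality and a multiscale argument controls the replacement error --- this is exactly the input imported from \cite{jm}\cite{jm1}. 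Once the replacement is performed, the drift equals $\int_0^t X^n_s(\Delta f)\,ds-2\int_0^t X^n_s\big(\nabla\!\big(f(1-\rho_s)F\big)\big)\,ds$ up to negligible terms, matching \eqref{spde}.

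Finally one identifies the noise and proves tightness. A direct computation of the quadratic variation gives $\Gamma^n_s(f)\to 2\int_{\bb T}\rho_s(u)(1-\rho_s(u))(\nabla f(u))^2\,du$, so $M^n_\cdot(f)$ converges to a continuous Gaussian martingale with quadratic variation $\int_0^t 2\int_{\bb T}\rho_s(1-\rho_s)(\nabla f)^2\,du\,ds$, which is the noise contribution in \eqref{spde}. Tightness of $\{X^n_\cdot\}$ in $C([0,T],H_{-k}(\bb T))$ for $k>3$ follows from uniform second-moment bounds on $X^n_t(f)$ and on $M^n_t(f)$ --- again a consequence of the $O(1)$ relative entropy bound via the entropy inequality and exponential moments under the product reference measure --- combined with Mitoma's criterion and an Aldous-type modulus-of-continuity estimate for each real-valued process $X^n_\cdot(f)$; the convergence of $X^n_0$ supplies the initial data. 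Since \eqref{spde} is linear with smooth bounded time-dependent coefficients, its martingale problem is well posed, so all limit points coincide and $X^n_\cdot\Rightarrow X_\cdot$.
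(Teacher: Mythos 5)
This statement is not proved in the paper at all: Theorem \ref{CLT} is imported verbatim from Jara and Menezes (Theorems 2.4 and 7.1 of \cite{jm}), and the present paper only uses it as an input, re-deriving in Section \ref{sec3} just the pieces of the entropy machinery (Theorem \ref{entropy}, Lemma \ref{main}) it needs for the occupation-time argument. Your sketch is, in substance, a faithful outline of the strategy of \cite{jm}: Dynkin decomposition, cancellation of the deterministic part against the hydrodynamic equation \eqref{pde}, control of the quadratic term $(\eta(x)-\rho_x)(\eta(x+1)-\rho_{x+1})$ by the refined relative entropy estimate in place of Kipnis--Varadhan, convergence of the quadratic variation to $2\int\rho_s(1-\rho_s)(\nabla f)^2$, tightness in $H_{-k}(\bb T)$, and uniqueness of the linear martingale problem. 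Two caveats: the genuinely hard content (the $O(1)$ entropy bound and the main lemma bounding $\sum_x w(x)w(x+1)G(x)$ by $\delta n^2 D(\sqrt f)+C\delta^{-1}(H+8)$) is treated as a black box in your sketch, so it is an outline of \cite{jm} rather than an independent proof; and the claim that the replacement error vanishes in $L^2(\bb P_{\mu^n})$ overstates what the entropy/Feynman--Kac route delivers, which is control through exponential moments and the entropy inequality (probability and $L^{1}$-type bounds), not a direct $L^2$ estimate.
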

In the last theorem, a stochastic process $\{X_t: t\in [0,T]\}$ is said to be the solution of \eqref{spde} if for any $f\in C^\infty\big([0,T]\times \bb T\big)$, the process $\{M_t(f): t\in[0,T]\}$ defined as 
\begin{equation}\label{mardef}
M_t(f)\,=\,X_t(f_t)\,-\,X_0(f_0)\,-\,\int_0^t\,X_s\big((\partial_s+\bb L_s)f_s\big)ds
\end{equation}
is a continuous martingale with respect to the filtration $\mc F_t=\sigma\{X_s(f_s): s\leq t, f\in C^\infty\big([0,T]\times \bb T\big)\}$,
whose quadratic variation is  
\begin{equation}\label{qvar}
\<M_t(f)\>=\int_0^t\int2\rho_s(u)(1-\rho_s(u))|\partial_u f_s(u)|^2duds.
\end{equation}
The generator $\bb L_t$ in \eqref{mardef} is defined as
\begin{equation}\label{Lt}
\bb L_t f(u)\,:=\,\Delta f(u)\,+\,2\big(1-2\rho_t(u)\big)F(u) f'(u)
\end{equation}
for any $f\in C^\infty(\bb T)$ and $u\in\bb T$.

\subsection{Occupation time at the origin}
To illustrate ideas, we first consider a particular case that $h(\eta_{sn^2})=\overline{\eta}_{sn^2}(0) $. With this choice of $h$, $\Gamma_n^h(t)$ becomes the so-called rescaled generalized occupation time at the origin, which we denote by $\Gamma_n(t)$:
$$ \Gamma_n(t)\,=\,\sqrt{n}\int_0^{t}\frac{\eta_{sn^2}(0)-\rho_s(0)}{q_s}ds,$$
where $q_s:[0,T]\to\bb R_+$ is a fixed smooth function such that $M^{-1}\leq q_s\leq M$ for some positive number $M$ and for all $s\in[0,T]$. 

We follow a classical idea to study the occupation time  in one dimensional particle systems, which is to take a test function that approximates well the Dirac delta function, in the density fluctuation field \eqref{dff}

The choice of the approximation test function to the Dirac function $\delta_0$ turns out a little bit tricky. It is helpful to consider the approximation in the discrete setting first. Let $\overline{\eta}_{tn^2}(x)$ be the centered occupation variable at site $x$ at time $tn^2$:
$$\overline{\eta}_{tn^2}(x)\,:=\,\eta_{tn^2}(x)\,-\,\rho_t\big(\frac{x}{n}\big).$$
In order to link the scaling limit of occupation time with the denstiy fluctuation field, we would like to replace $\overline{\eta}_{sn^2}(0)$ inside $\Gamma_n(t)$ by some variable which depends on coordinates of $\eta$ in a small box of order $n$, with a negligible cost after taking time integration. As mentioned in introduction, a popular option is to replace $\overline{\eta}(0)$ by $\frac{1}{\varepsilon n}\sum_{x=1}^{\varepsilon n}\overline{\eta}(x)$, then let $\varepsilon\to 0$. However it is not clear how to control the cost of doing such a replacement. On the other hand, an integration by parts formula obtained in \cite{jm} permits to estimate  the cost of replacing $w_{sn^2}(x)$ by  $w_{sn^2}(y)$, where  
$$w_{sn^2}(x)\,:=\,\frac{\eta_{tn^2}(x)\,-\,\rho_t\big(\frac{x}{n}\big)}{\rho_t\big(\frac{x}{n}\big)\big(1-\rho_t\big(\frac{x}{n}\big)\big)} \quad \text{for all }\,\, x\in\bb T_n,$$
and $\rho_t$ is the hydrodynamic limit equation. 
This indicates that it may be a good idea to replace $w(0)$ by $\frac{1}{\varepsilon n}\sum_{x=1}^{\varepsilon n}w(x)$.
Therefore a more appropiate way to deal with the limit of $\Gamma_n(t)$ is as follows: we first write $\Gamma_n(t)$ in the form of 
$$\Gamma_n(t)\,=\,\sqrt{n}\int_0^{t}\frac{\rho_s(0)\big(1-\rho_s(0)\big)}{q_s}w_{sn^2}(0)ds,$$
then approximate it by 
\begin{equation}\label{disapr}
\begin{split}
&\sqrt{n}\int_0^{t}\frac{\rho_s(0)\big(1-\rho_s(0)\big)}{q_s}\frac{1}{\varepsilon n}\sum_{x=1}^{\varepsilon n}w_{sn^2}(x)ds\\
=\,&\sqrt{n}\int_0^{t}\sum_{x=1}^{\varepsilon n}\frac{\rho_s(0)\big(1-\rho_s(0)\big)}{q_s\,\rho_s\big(\frac{x}{n}\big)\big(1-\rho_s\big(\frac{x}{n}\big)\big)}\frac{1}{\varepsilon n}\overline{\eta}_{sn^2}(x)ds\quad \text{as}\,\, \varepsilon\to 0
\end{split}
\end{equation}
and estimate the cost. 

There is one more issue needs to be taken care: the test function for the density fluctuation field has to be smooth. Let $\mathds{1}_{A}$ be the indicator function of the set $A\subset \bb T$. In the above discussion, $\sqrt{n}w_{sn^2}(0)$ is actually replaced by 
$$X_s^n\Big(\frac{\varepsilon^{-1}\mathds{1}_{(0,\varepsilon)}(\cdot)}{\rho_s(\cdot)(1-\rho_s(\cdot))}\Big).$$
The test function inside the big brace is not smooth. Since $\rho_s$ is a smooth function, we only need to find a smooth version of function $\varepsilon^{-1}\mathds{1}_{(0,\varepsilon)}$. 

Based on the  intuition above, we are ready to introduce the approximation test function.  Let $\phi:\bb R\to\bb R$ be a nonnegative smooth function with support $(0,1)$ and has integral $1$. Define the mollifiers $\phi_\varepsilon(u):=\varepsilon^{-1}\phi(u/\varepsilon)$ for each $\varepsilon\in(0,1)$.  This smooth function $\phi_\varepsilon(u)$ will play the  role of $\varepsilon^{-1}\mathds{1}_{(0,\varepsilon)}$ when performing the replacement,  as we discussed previously.

Fix $T>0$. The expression in \eqref{disapr} inspires us to define the real-valued process $\{Z_{t,n}^\varepsilon:t\in[0,T]\}$ by 
\begin{equation}\label{Ztn}
Z_{t,n}^\varepsilon\,:=\,\int_0^tX^n_s\Big(\frac{\phi_\varepsilon(\cdot)\mc X(\rho_s(0)) }{q_s\mc X(\rho_s(\cdot))}\Big)ds.
\end{equation}
where $\mc X(\rho):= \rho(1-\rho)$ is the \textit{compressibility} of the system. As $n\to\infty$, by Theorem \ref{CLT}, $Z_{t,n}^\varepsilon$ converges in distribution to 
\begin{equation}\label{Zt}
Z_t^\varepsilon\,:=\,\int_0^tX_s\Big(\frac{\phi_\varepsilon(\cdot)\mc X(\rho_s(0)) }{q_s\mc X(\rho_s(\cdot))}\Big)ds.
\end{equation}
 
Our first result is the characterization of the limit of $Z_t^\varepsilon$ as $\varepsilon\to 0$.
\begin{theorem}\label{limit}
Assume that $X_0$ is a Gaussian random field and $H_n(0)=O(1)$. Then as $\varepsilon\to 0$, the process $\{Z_t^\varepsilon:t\in[0,T]\}$ converges in distribution  to a Gaussian process $\{Z_t:t\in[0,T]\}$, in space $C([0,T],\bb R)$ endowed with the uniform topology. 
\end{theorem}

The limiting process $Z_t$ above is also the scaling limit of the generalized occupation time $\Gamma_n(t)$. This is the content of our second result.
\begin{theorem}\label{time}
Assume that $X_0$ is a Gaussian random field and $H_n(0)=O(1)$. Then as $n\to\infty$, the process $\{\Gamma_n(t): t\in [0,T]\}$ converges in distribution, to the same Gaussian process $\{Z_t:t\in[0,T]\}$ as in Theorem \ref{limit}, in space $D([0,T], \bb R)$ endowed with uniform topology.
\end{theorem}

\subsection{Additive functionals}
Theorem \ref{time} can be extended from occupation time to a wide class of additive functionals. 

Fix a positive integer $R$. A  function $h:\Omega_n\to\bb R$ is said to be \textit{local} with support on $[-R,R]\cap \bb Z$, if $h(\sigma^z\eta)\,=\,h(\eta)$ for every integer $z\notin [-R,R]\cap\bb Z$, where $\sigma^z(\eta)$ is the configuration 
\begin{equation*}
(\sigma^z\eta)(x)=
\begin{cases}
\eta(x) & \mbox{ if } x\neq z\\
1-\eta(z) & \mbox{ if } x=z\, .
\end{cases}
\end{equation*} 
Given a local function $h$, let $\varphi_h(\beta)$ be the expectation of $h$ with respect to Bernoulli product measure $\nu_\beta^n$:
$$\varphi_h(\beta)\,:=\,E_{\nu_\beta^n}[h], \quad \beta\in[0,1]\,.$$
We would like to investigate the limit of the following additive functionals:
$$\Gamma_n^h(t)\,:=\, \sqrt{n}\int_0^{t}\frac{h(\eta_{sn^2})-\varphi_h(\rho_s(0))}{q_s}ds.$$

Similar to what  we did to deal with occupation time, the idea here is still to find a proper test function for the density fluctuation field, which allows us to perform a replacement for $\Gamma_n^h(t)$ and takes advantage of the convergence of the proecess $\{X^n_t, 0\leq t\leq T\}.$

In order to explain how to discover the appropriate  test function,  a careful analysis of the structure of the local function $h$ with a fixed finite support is needed. Given a local function $h$ with support on $[-R,R]\cap\bb Z$,  in Section \ref{sec3} we will show that $h(\eta_{sn^2})$ can be written as 
\begin{equation}\label{hdec}
\sum_{A\subset [-R,R]\cap\bb Z}c_s(A) \prod_{x\in A}\Big\{ \eta_{sn^2}(x)\,-\,\rho_s\big(\frac{x}{n}\big)\Big\},
\end{equation}
with convention 
$$\prod_{x\in \varnothing}\Big\{ \eta_{sn^2}(x)\,-\,\rho_s\big(\frac{x}{n}\big)\Big\} \,=\,1,$$
where the coefficient $c_s(A)$ satisfies
$$\sup_{0\leq s\leq T}\big\lvert c_s(\varnothing)-\varphi_h(\rho_s(0))\big\rvert\,\lesssim \,n^{-1},$$
$$\sup_{0\leq s\leq T}\big\lvert \sum_{|x|\leq R} c_s(\{x\})\,-\,\varphi'_h(\rho_s(0))\big\rvert\lesssim n^{-1}.$$
In view of what we did for occupation time previously, each $\sqrt{n}\big(\eta_{sn^2}(x)\,-\,\rho_s(x/n)\big)$ can be replaced by 
$$X_s^n\Big(\frac{\phi_\varepsilon(\cdot)\mc X(\rho_s(0))}{\mc X(\rho_s(\cdot))}\Big).$$
In addition, it will be shown that the terms $\prod_{x\in A}\Big\{ \eta_{sn^2}(x)\,-\,\rho_s(x/n)\Big\}$ with $|A|\geq 2$ contribute nothing in the replacement:
$$\limsup_{n\to 0}\sqrt{n}\int_0^t \sum_{\substack{A\subset [-R,R]\cap\bb Z\\|A|\geq 2}}c_s(A) \prod_{x\in A}\Big\{ \eta_{sn^2}(x)\,-\,\rho_s\big(\frac{x}{n}\big)\Big\}ds\,=\,0.$$

Discussions above lead us to define the following real-valued processes 
\begin{equation}\label{Ztnh}
Z_{t,n}^{h,\varepsilon}\,:=\,\int_0^tX^n_s\Big(\frac{\phi_\varepsilon(\cdot)\mc X(\rho_s(0)) \varphi'_h(\rho_s(0))}{q_s\mc X(\rho_s(\cdot))}\Big)ds,
\end{equation}
\begin{equation}\label{Zth}
Z_{t}^{h,\varepsilon}\,:=\,\int_0^tX_s\Big(\frac{\phi_\varepsilon(\cdot)\mc X(\rho_s(0)) \varphi'_h(\rho_s(0))}{q_s\mc X(\rho_s(\cdot))}\Big)ds.
\end{equation}
From definition we see that $Z_t^\varepsilon$(resp. $Z_{t,n}^\varepsilon$) is just $Z_{t}^{h,\varepsilon}$(resp. $Z_{t,n}^{h,\varepsilon}$) choosing $h=\eta_0$ in particular.

Main results of our paper are the following two theorems. Apparently these two theorem are generalizations of Theorem \ref{limit} and Theorem \ref{time} respectively.
\begin{theorem}\label{limith}
Assume that $X_0$ is a Gaussian random field and $H_n(0)=O(1)$. Then as $\varepsilon\to 0$, the process $\{Z_{t}^{h,\varepsilon}:t\in[0,T]\}$ converges in distribution  to a Gaussian process $\{Z_{t}^h:t\in[0,T]\}$, in space $C([0,T],\bb R)$ endowed with the uniform topology.
\end{theorem}

\begin{theorem}\label{timeh}
Assume that $X_0$ is a Gaussian random field and $H_n(0)=O(1)$. Then as $n\to\infty$, the process $\{\Gamma_{n}^h(t): t\in [0,T]\}$ converges in distribution, to the same Gaussian process $\{Z^h_t:t\in[0,T]\}$ as in Theorem \ref{limith}, in space $D([0,T], \bb R)$ endowed with uniform topology.
\end{theorem}

\section{A local replacement lemma}\label{sec3}
The main purpose of this section is to prove a local replacement lemma, which is Lemma \ref{ZG} in subsection \ref{sec33}. In subsection \ref{sec31} we decompose the local function $h$ into the sum of basis of different degrees. This decomposition indicates why $Z_{t,n}^{h,\varepsilon}$ is the appropriate process to be chosen in the replacement lemma. In subsection \ref{sec32} we recall some results from \cite{jm}, which will be used to prove the local replacement lemma. In subsection \ref{sec34} and \ref{sec35}, we provide proofs of some estimates needed in the proof of Lemma \ref{ZG}.
\subsection{Decomposition of the local function $h$}\label{sec31}
For a subset $A\subset \bb Z$, define 
$$\overline{\eta}_{sn^2}(A)\,:=\, \prod_{x\in A}\Big\{ \eta_{sn^2}(x)\,-\,\rho_s\big(\frac{x}{n}\big)\Big\},$$
with convention $\overline{\eta}_{sn^2}(\varnothing)=1.$
It is not hard to see that the local function $h(\eta_{sn^2})$ can be represented in the form of linear combination of $\overline{\eta}_{sn^2}(A)$:
$$h(\eta_{sn^2})\,=\,\sum_{A\subset \bb T_n}c_s(A) \overline{\eta}_{sn^2}(A).$$
Since we assume that the local function $h$ is supported on $[-R,R]\cap\bb Z$ for some fixed  positive integer $R$, the cofficient $c_s(A)$ is equal to zero if $A$ contains some element $x$ such that $x\notin [-R,R]\cap\bb Z$. In other words, we can write 
\begin{equation}\label{dech}
h(\eta_{sn^2})\,=\,\sum_{A\subset [-R,R]\cap\bb Z}c_s(A) \overline{\eta}_{sn^2}(A).
\end{equation}

We claim that there exists a constant $C$ independent of $s$ such that 
\begin{equation}\label{claimdec}
\begin{split}
&\big\lvert\varphi'_h(\rho_s(0))\,-\, \sum_{|x|\leq R} c_s(\{x\})\big\rvert\,\leq\, Cn^{-1},\\
&\big\lvert\varphi_h(\rho_s(0))\,-\, c_s(\varnothing)\big\rvert\,\leq\, Cn^{-1}.
\end{split}
\end{equation}
Indeed, since $\rho_s$ is smooth and $R$ is fixed, replacing every $\rho_s(x/n)$ from $\overline{\eta}_{sn^2}(A)$ by $\rho_s(0)$ in \eqref{dech}, we have 
$$h(\eta_{sn^2})\,=\,\sum_{A\subset [-R,R]\cap\bb Z}\widetilde{c}_s(A) \prod_{x\in A}\Big\{ \eta_{sn^2}(x)\,-\,\rho_s(0)\Big\},$$
with the new cofficient $\widetilde{c}_s(A)$ satisfying 
$$\sup_{0\leq s\leq T}\sup_{A\subset [-R,R]\cap\bb Z}|c_s(A)-\widetilde{c}_s(A)|\,\lesssim\,n^{-1}.$$
To prove the claim, it remains to observe that 
$$\varphi_h(\rho_s(0))\,=\,\widetilde{c}_s(\varnothing), \quad   \varphi'_h(\rho_s(0))\,=\, \sum_{|x|\leq R} \widetilde{c}_s(\{x\}).$$

\subsection{Entropy estimate and related results}\label{subsecEnt}\label{sec32}

 For any function $f:\bb T_n\to\bb R$, define the Dirichlet form with respect to a probability measure $\mu$ on $\Omega_n$ by
$$D(f;\mu)\,=\,\sum_{\eta\in\Omega_n}\sum_{x,y\in\bb T_n}[f(\eta^{x,y})\,-\,f(\eta)]^2 \mu(\eta).$$
Note that here $\frac{1}{2}D(f;\mu)$ is not equal to $-\<f, L_nf \>_{\mu}$. However they satisfy the following relation: for every probability measure $\mu$, 
\begin{equation}\label{relD}
-\<f,L_nf\>_{\mu}\,\geq\, \frac{1}{2}D(f;\mu).
\end{equation}

Recall the definition \eqref{defnu}. For every $t\in(0,T]$, let $\mu_t^n$ be the Bernoulli product measure with slowing varying parameter $\rho_t(\cdot)$ and $\nu^n_{1/2}$ be the Bernoulli product measure with slowing varying parameter $1/2$. $\mu_t^n$ is going to be the time-dependent reference measure to approximate the distribution of $\eta_{tn^2}$. 
Let $f_t^n=\frac{d\eta^n_{tn^2}}{d\mu_t^n}$ and let  $H_n(t)$ be the relative entropy of the law of $\eta_{tn^2}$ with respect to the reference measure $\mu_t^n$: 
$$H_n(t)\,=\,\int_{\Omega_n} f_t^n\log f_t^n d\mu_t^n.$$

The following theorem gives a sharp upper bound on the relative entropy $H_n(t)$, which depends on $H_n(0)$ and $\rho_0(\cdot)$. Recall that we assume that $\rho_0\in (\varepsilon_0,1-\varepsilon_0)$ for some small $\varepsilon_0>0$. It is shown in Lemma B.3 in \cite{jm} that $\rho(t,\cdot)\in (\varepsilon_1,1-\varepsilon_1)$ for some $\varepsilon_1\in (0,1/2)$. For the simplicity of notation, we just assume that $\rho_t(u)\in (\varepsilon_0,1-\varepsilon_0)$ for all $t\in[0,T]$ and all $u\in\bb T$. It is also proved in Proposition B.1 in \cite{jm} that $\rho_t$ is smooth. Therefore we can assume that there exists $\kappa>0$ such that 
$$\Big\lvert \rho_t\big(\frac{x+1}{n}\big)\,-\,\rho_t\big(\frac{x}{n}\big)\Big\rvert \leq \frac{\kappa}{n}, \quad \text{for all \,\,} x\in T_n, \,\, \text{all}\,\,n\in\bb N,  \,\, \text{and all\,\,} t\in\bb [0,T].$$

\begin{theorem}[Theorem 2.2 in \cite{jm}]\label{entropy}
For every $t\in [0,T]$, there exists a constant $C=C(\varepsilon_0, T,\kappa)$\footnote{Throughout this article, the constant $C$ always depends on fixed parameters only, and may change from line to line.} such that
\begin{equation*}
H_n(t)\,\leq\, C(H_n(0)\,+\,8).
\end{equation*}
\end{theorem}

The next lemma is one of the main steps in the  proof the above theorem.  It is a particular case  of Lemma 3.1 of \cite{jm} with $A=\{0\}$.
\begin{lemma}\label{main}
 Let $\nu^n_{\rho(\cdot)}$ be the Bernoulli product measure with slowly  varying parameter associated to a smooth profile $\rho(\cdot)$ which takes value in $(\varepsilon_0,1-\varepsilon_0)$.
Given a function $G:\bb T_n\to\bb R$, there exists a finite constant $C=C(\varepsilon_0)$ such that, for any density $f$ with respect to $\nu^n_{\rho(\cdot)}$ and any $\delta>0$,
\begin{equation*}
\begin{split}
&\int  \sum_{x\in\bb T_n}w(x) w(x+1) G(x)f d\nu^n_{\rho(\cdot)}\\
\leq\,&\delta n^2 D(\sqrt{f};\nu^n_{\rho(\cdot)})\,+\, \frac{C(1+\kappa^2)}{\delta}\big(\|G\|_\infty\,+\,\|G\|_\infty^2\big)\big(H(f;\nu^n_{\rho(\cdot)})\,+\, 8\big),
\end{split}
\end{equation*}
where $H(f;\nu^n_{\rho(\cdot)}):=\int f\log f d\nu^n_{\rho(\cdot)}$.
\end{lemma}
In the proof of this lemma in \cite{jm}, the authors defined an important operator $U_\delta$  which acts on functions $G:\bb T_n\to\bb R$. The following estimates obtained there on $U_\delta$ will be useful later:
\begin{equation}\label{v2}
\int  \sum_{x\in\bb T_n}w(x) w(x+1) G(x)f d\nu^n_{\rho(\cdot)}\,\leq\,\delta n^2D(\sqrt{f};\nu^n_{\rho(\cdot)})\,+\,\int U_\delta(G) fd\nu^n_{\rho(\cdot)}
\end{equation}
and 
\begin{equation}\label{sum2}
\int |U_\delta(G)|f d\nu^n_{\rho(\cdot)}\,\leq\, C(\varepsilon_0,\kappa)\frac{\|G\|_\infty+\|G\|_\infty^2}{\delta}\big(H(f;\nu^n_{\rho(\cdot)})\,+\,8\big).
\end{equation}
We refer to the original proof of Lemma 3.1 in \cite{jm} for the explicit expression of $U_\delta(\cdot)$ and other details.


\subsection{Replacement lemma}\label{sec33}

Before going into the replacement lemma, we need to state some known results. The first one connects tail bounds and moments estimates.
\begin{lemma}\label{pe}[Lemma 4.3 of \cite{jm}]
Let $X$ be a nonnegative random variable. Assume there exists a constant $C_0>0$ such that $P(X>\Delta)\,\leq\,C_0/\Delta^2$ for all $\Delta>0$. Then  for any $\lambda\in(1,2)$, there exists a constant $C(\lambda)$ independent of $X$,  such that $E[X^\lambda]\,\leq\, C(\lambda) C_0^{\lambda/2}$.
\end{lemma}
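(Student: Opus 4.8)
The plan is to use the layer-cake (tail-integral) representation of $E[X^\lambda]$ together with a split of the domain of integration at the natural scale $\sqrt{C}$. Concretely, I would start from the identity $E[X^\lambda] = \int_0^\infty \lambda t^{\lambda-1} P(X>t)\,dt$ (valid for any $\lambda>0$ by Fubini/Tonelli) and cut the integral at a free parameter $a>0$ to be optimized at the end.

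On the region $(0,a)$ I would use the trivial bound $P(X>t)\le 1$, which contributes at most $\int_0^a \lambda t^{\lambda-1}\,dt = a^\lambda$; here $\lambda>0$ is all that is needed for integrability at the origin. On the region $(a,\infty)$ I would insert the hypothesis $P(X>t)\le C/t^2$, which contributes at most $C\lambda\int_a^\infty t^{\lambda-3}\,dt = \frac{C\lambda}{2-\lambda}\,a^{\lambda-2}$; the assumption $\lambda<2$ enters precisely here, since it forces the exponent $\lambda-3<-1$ and makes the tail integral finite. Altogether this yields $E[X^\lambda]\le a^\lambda + \frac{C\lambda}{2-\lambda}\,a^{\lambda-2}$ for every $a>0$.

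It then remains to minimize the right-hand side over $a>0$. A quick differentiation (or balancing the two terms) shows that the optimal choice is $a=\sqrt{C}$, for which $a^\lambda = C^{\lambda/2}$ and $C\,a^{\lambda-2} = C^{\lambda/2}$, so that $E[X^\lambda]\le \big(1+\tfrac{\lambda}{2-\lambda}\big)C^{\lambda/2} = \tfrac{2}{2-\lambda}\,C^{\lambda/2}$. Hence the claim holds with $C(\lambda) := \tfrac{2}{2-\lambda}$, a constant depending only on $\lambda$ (and blowing up as $\lambda\uparrow 2$, which is to be expected given only a quadratic tail bound).

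I do not expect any genuine obstacle: the delicate points are merely bookkeeping — justifying the tail-integral formula for non-integer exponents, tracking that the quadratic tail (exponent $2$) together with $\lambda<2$ is exactly what makes the second integral converge, and noticing that choosing the split point to be $\sqrt{C}$ rather than an absolute constant is what produces the correct $C^{\lambda/2}$ scaling on the right. The lower bound $\lambda>1$ plays no role in this estimate and is imposed only because that is the range of exponents in which the lemma is later applied.
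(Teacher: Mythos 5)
Your proof is correct: the layer-cake identity, the split of the tail integral at $a=\sqrt{C}$, and the resulting constant $C(\lambda)=\tfrac{2}{2-\lambda}$ all check out, and your remark that only $\lambda<2$ (not $\lambda>1$) is actually needed is accurate. The paper itself does not prove this lemma but imports it as Lemma 4.3 of \cite{jm}, and your argument is essentially the same standard tail-estimate proof given there, so there is nothing to add.
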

The second one is a well known consequence of the entropy inequality.
\begin{proposition}\label{entpro}[Proposition 8.2 in Appendix 1 of \cite{kl}]
Let $\mu$ be a measure on a finite space $\Omega $ and let $f$ be a density with respect to $\mu$. Then for any $A\subset \Omega$,
$$\int_A fd\mu\,\leq\,\frac{H(f;\mu)\,+\,\log 2}{\log \big(1+\mu(A)^{-1}\big)}.$$
\end{proposition}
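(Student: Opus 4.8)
The plan is to obtain the inequality from the elementary entropy (Young-type) inequality, taking advantage of the fact that on a finite space all the integrals below are finite sums. Recall the \emph{entropy inequality}: for any density $f$ with respect to $\mu$ and any bounded $g\colon\Omega\to\bb R$,
$$\int gf\,d\mu\;\leq\;\log\int e^{g}\,d\mu\;+\;H(f;\mu).$$
I would justify this in one line: with $P=f\mu$ and $Q$ the probability measure with $dQ=\big(\int e^{g}d\mu\big)^{-1}e^{g}\,d\mu$ (note $P\ll Q$ since $e^{g}>0$), the nonnegativity of the relative entropy $H(P\mid Q)\geq0$ reads, after expanding $\log\frac{dP}{dQ}=\log f-g+\log\int e^{g}\,d\mu$ and using $\int f\,d\mu=1$, exactly $H(f;\mu)-\int gf\,d\mu+\log\int e^{g}\,d\mu\geq0$. (Equivalently, it is Jensen's inequality for the concave function $\log$ against the probability measure $f\,d\mu$, with the usual convention $0\log 0=0$ handling the set $\{f=0\}$.)

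Next I would specialize the test function to $g=\gamma\,\mathds{1}_{A}$ with a free parameter $\gamma>0$. Since $\int e^{\gamma\mathds{1}_{A}}\,d\mu=e^{\gamma}\mu(A)+\big(1-\mu(A)\big)=1+(e^{\gamma}-1)\mu(A)$, the entropy inequality gives
$$\gamma\int_{A}f\,d\mu\;\leq\;\log\!\big(1+(e^{\gamma}-1)\mu(A)\big)\,+\,H(f;\mu).$$
This already yields a bound on $\int_A f\,d\mu$ for every $\gamma>0$, and it remains to pick a convenient value. Choosing $\gamma=\log\big(1+\mu(A)^{-1}\big)$ makes $(e^{\gamma}-1)\mu(A)=1$, so the logarithm on the right collapses to $\log 2$, and dividing by $\gamma>0$ gives precisely
$$\int_{A}f\,d\mu\;\leq\;\frac{H(f;\mu)+\log 2}{\log\big(1+\mu(A)^{-1}\big)},$$
which is the claim. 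The degenerate case $\mu(A)=0$ is trivial since then $\int_{A}f\,d\mu=0$ (and the right-hand side is interpreted as $0$).

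There is no genuine obstacle here — this is a classical estimate (Proposition 8.2 in Appendix 1 of \cite{kl}), and the only points deserving a word of care are the $\{f=0\}$ convention in the entropy inequality and the observation that, although one could minimize $\gamma\mapsto\gamma^{-1}\big[\log(1+(e^{\gamma}-1)\mu(A))+H(f;\mu)\big]$ over $\gamma>0$, any positive $\gamma$ is admissible and the stated choice is simply the cleanest.
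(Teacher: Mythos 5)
Your proof is correct and is essentially the standard argument behind the cited result (the paper itself offers no proof, simply referring to Proposition 8.2 in Appendix 1 of \cite{kl}, whose proof is exactly this): the entropy inequality applied to the test function $\gamma\mathds{1}_A$, followed by the choice $\gamma=\log\big(1+\mu(A)^{-1}\big)$ that collapses the logarithmic term to $\log 2$. The only implicit assumption is that $\mu$ is a probability measure (you use $\mu(A^c)=1-\mu(A)$), which is the setting in which the proposition is stated in \cite{kl} and applied in this paper.
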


We now turn to the replacement lemma. Recall the definition \eqref{Ztnh} of $Z_{t,n}^{h,\varepsilon}$. If one tries to replace $\Gamma_n^h(t)$ by $Z_{t,n}^{h,\varepsilon}$, there will be some error terms coming out whose moment estimates are complicated to deal with. We need to take these error terms out of $\Gamma_n^{h}(t)$. Thus, define the errror process $\{J_n^{h,\varepsilon}(t):t\in[0,T]\}$ by 
$$J_n^{h,\varepsilon}(t)\,:=\,J_{n,1}^{h}(t)\,+\,J_{n,2}^{h,\varepsilon}(t)\,+\,J_{n,3}^{h,\varepsilon}(t)$$
where $J_{n,1}^{h}(t),J_{n,2}^{h,\varepsilon}(t), J_{n,3}^{h,\varepsilon}(t)$ are given below in \eqref{defJ1},\eqref{defJ2} and \eqref{defJ3} respectively. It will be clear from the proof of the local replacement lemma why the error process $J_n^{h,\varepsilon}(t)$ are defined in such way.

The following process will be used to perform the replacement:
$$\widetilde{\Gamma}_n^{h,\varepsilon}(t)\,:=\, \Gamma_n^h(t)\,-\,J_n^{h,\varepsilon}(t).$$ 
The next lemma shows that $Z_{t,n}^{h,\varepsilon}$ and $\widetilde{\Gamma}^{h,\varepsilon}_n(t)$ are close under the $L^{\lambda}$ norm for $1<\lambda<2$.
\begin{lemma}[Local Replacement Lemma]\label{ZG}
Fix $0\leq s < t \leq T$. Then for every $\lambda\in (1,2)$, there exists a positive constant $C$ independent of $n$ and $\varepsilon$ such that
$$\bb E_{\mu^n}\Big[\big|Z_{t,n}^{h,\varepsilon}\,-\,Z_{s,n}^{h,\varepsilon}\,-\,\big(\widetilde{\Gamma}^{h,\varepsilon}_n(t)-\widetilde{\Gamma}^{h,\varepsilon}_n(s)\big)\big|^\lambda\Big]\,\leq\, C(t-s)^{\frac{\lambda}{2}}\varepsilon^{\frac{\lambda}{2}} $$
for every $n\in \bb N$.
\end{lemma}
\begin{proof}
It is enough to prove the lemma only for $s<t$ such that $t-s\leq 1$. Thanks to Lemma \ref{pe}, to prove this lemma,  it is enough to show that, there exists a constant $C>0$ such that
$$\bb P_{\mu^n}\Big[\big\lvert Z_{t,n}^{h,\varepsilon}\,-\,Z_{s,n}^{h,\varepsilon}\,-\,\big(\widetilde{\Gamma}^{h,\varepsilon}_n(t)-\widetilde{\Gamma}^{h,\varepsilon}_n(s)\big)\big\rvert>\Delta\Big]\,\leq\, \frac{C(t-s)\varepsilon }{\Delta^2},$$
for all $\Delta>0$.

Recall that 
$$\overline{\eta}_{rn^2}(A)=\prod_{x\in A}\Big\{\eta_{rn^2}(x)-\rho_r(\frac{x}{n})\Big\}, \quad \text{for every subset}\, A\subset\bb Z.$$ 
Define
$$V^\varepsilon_{r,z}(\eta):=\sqrt{n}\Big\{\frac{\overline{\eta}(z)}{q_r}\,-\,\frac{1}{n}\sum_{x=1}^{n}\phi_\varepsilon\big(\frac{x}{n}\big) w(x)\frac{\mc X(\rho_r(0))}{q_r}\Big\}.$$
This corresponds replacement for $\overline{\eta}(A)$ with $|A|=1$. Given a set $A\subset\bb Z$ with $|A|\geq 2$, denote by $A_l$ the smallest element of $A$ and by $A_r$ the largest element of $A$. Let us define
\begin{equation}\label{defW}
\begin{split}
W_{r,A}^{\varepsilon}(\eta)\,=\, &\sqrt{n}\Big\{\frac{\overline{\eta}(A)}{q_r}\,-\,\Big(\frac{1}{n^{1/6}}\sum_{x=1}^{n^{1/6}} w(-x+A_l)\mc X(\rho_r\big(\frac{A_l}{n}\big))\Big)\\
& \times\frac{\overline{\eta}(A\backslash\{A_l,A_r\})}{q_r}  \Big(\frac{1}{n}\sum_{x=1}^{n}\phi_\varepsilon\big(\frac{x}{n}\big) w(x+A_r)\mc X(\rho_r\big(\frac{A_r}{n}\big))\Big)\Big\}.
\end{split}
\end{equation}
To simplify the notation, denote the long product term after the minus sign by $Q_{r,A}^{\varepsilon}(\eta)$. One may already see from the way we define $W_{r,A}^{\varepsilon}$ that, for $\overline{\eta}_{rn^2}(A)$ with $|A|\geq 2$, replacements will be performed on $\overline{\eta}_{rn^2}(A_l)$ and $\overline{\eta}_{rn^2}(A_r)$ only.  

By notation above and \eqref{dech}, we have
\begin{equation}\label{dif4terms}
\begin{split}
&\big(\Gamma^h_n(t)\,-\,\Gamma^h_n(s)\big)\,-\,(Z_{t,n}^{h,\varepsilon}\,-\,Z_{s,n}^{h,\varepsilon})\\
=\,&  \int_s^t \sum_{|z|\leq R}c_r(\{z\})V_{r,z}^\varepsilon dr\,+\, \int_s^t \sum_{\substack{A\subset[-R,R]\cap\bb Z\\|A|\geq 2}} c_r(A)W_{r,A}^{\varepsilon}dr\\
+\,& \big(J_{n,1}^{h}(t) - J_{n,1}^{h}(s)\big)\,+\, \big(J_{n,2}^{h,\varepsilon}(t) - J_{n,2}^{h,\varepsilon}(s)\big)\,+\,\big(J_{n,3}^{h,\varepsilon}(t) - J_{n,3}^{h,\varepsilon}(s)\big)
\end{split}
\end{equation}
where 
\begin{equation}\label{defJ1}
J_{n,1}^{h}(t)\,=\, \sqrt{n}\int_0^t \frac{c_s(\varnothing)\,-\,\varphi_h(\rho_s(0))}{q_s}ds
\end{equation}
 \begin{equation}\label{defJ2}
J_{n,2}^{h,\varepsilon}(t)\,=\, \sqrt{n}\int_0^t \Big[ \sum_{|z|\leq R}c_r(\{z\})\,-\,\varphi'_h(\rho_s(0))\Big]\frac{1}{n}\sum_{x=1}^{n}\phi_\varepsilon\big(\frac{x}{n}\big) w_{rn^2}(x)\frac{\mc X(\rho_r(0))}{q_r}dr
\end{equation}
 \begin{equation}\label{defJ3}
J_{n,3}^{h,\varepsilon}(t)\,=\, \sqrt{n}\int_0^t \sum_{\substack{A\subset[-R,R]\cap\bb Z\\|A|\geq 2}} c_r(A)Q_{r,A}^{\varepsilon}dr
\end{equation}
Therefore, to prove the lemma, it is sufficient to bound the probability that the absolute value of the first two terms at the right hand side of \eqref{dif4terms} is larger than $\Delta$, from above by $C(t-s)\varepsilon \Delta^{-2}$. Those estimates are achieved in Lemma \ref{termone} and Lemma \ref{termtwo} respectively in the next two subsections. 
\end{proof}

\subsection{Estimate of the first term in \eqref{dif4terms}}\label{sec34}
\begin{lemma}\label{termone}
There exists a constant $C$ independent of $n$ and $\varepsilon$ such that 
$$\bb P_{\mu^n}\Big[ \Big\lvert \int_s^t \sum_{|z|\leq R}c_r(\{z\})V_{r,z}^\varepsilon ds\Big\rvert>\Delta\Big]\,\leq\, \frac{C(t-s)\varepsilon }{\Delta^2}$$
for all $\Delta>0$.
\end{lemma}
\begin{proof}
Let us denote
$$V_r^\varepsilon\,=\, \sum_{|z|\leq R}c_r(\{z\})V_{r,z}^\varepsilon.$$
 By triangle inequality, to prove the lemma, it is sufficient to show that 
\begin{equation}\label{eqtone}
\bb P_{\mu^n}\Big[\int_s^t \pm V_r^\epsilon \,-\,\frac{\gamma}{2}\, U_\delta(G_r) dr >\Delta\Big]\,\leq\, \frac{C(t-s)\varepsilon }{\Delta^2}
\end{equation}
\begin{equation}\label{eqttwo}
\bb P_{\mu^n}\Big[\Big\lvert\int_s^t \frac{\gamma}{2}\, U_\delta(G_r)dr\Big\rvert>\Delta\Big]\,\leq\,  \frac{C(t-s)\varepsilon }{ \Delta^2}
\end{equation}
for some constants $\gamma>0$, $\delta>0$ and a function $G_r:\bb T_n\to \bb R$ that will be determined later. Here $U_\delta$ is the operator in subsection \ref{subsecEnt} satisfying \eqref{v2}\eqref{sum2}.

The second inequality \eqref{eqttwo} is easy to deal with. Choose $\delta=1/4$ and write $U_{1/4}(G)$ simply as $U(G)$ in the rest of this section. By Markov inequality, the left hand side of \eqref{eqttwo} is bounded by 
$$\Delta^{-1}\bb E_{\mu^n}\Big[\Big\lvert\int_s^t\frac{\gamma}{2}\, U(G_r)dr\Big\rvert\Big]\,\leq\, \frac{(t-s)\gamma}{2\Delta}\sup_{0\leq r\leq T}\bb E_{\mu^n}\big[\big\lvert U(G_r)\big\rvert\big].$$
Since $H_n(r)$ is of order $O(1)$ for every $0\leq r\leq T$, by  \eqref{sum2}, as long as $\|G_r\|_\infty$ is uniformly bounded over $n$ and $r$, $\bb E_{\mu^n}\big[\big\lvert U(G_r)\big\rvert\big]$ is bounded uniformly over $r$ by some constant $C$.  Choosing $\gamma=B(t-s)\varepsilon\Delta^{-1}$ with $B=B(\varepsilon_0, \phi, M)$ that will be determined later, since $t-s\leq1$, inequality \eqref{eqttwo} is proved.

We now turn to prove the first inequality \eqref{eqtone}. By Proposition \ref{entpro},
$$\bb P_{\mu^n}\Big[\int_s^t \pm V_r^\varepsilon\,-\,\frac{\gamma}{2} U(G_r) dr >\Delta\Big]\,\leq\,\frac{\log 2}{\log\Big(1\,+\, \bb P_{\mu^n}\Big[\int_s^t \pm V_r^\varepsilon\,-\,\frac{\gamma}{2}\, U(G_r) dr>\Delta\Big]^{-1} \Big)}.$$
Thus it is enough to show that
\begin{equation}\label{midexp}
\bb P_{\mu^n}\Big[\int_s^t \pm V_r^\varepsilon\,-\,\frac{\gamma}{2}\, U(G_r) dr>\Delta\Big]\,\leq\, \exp\Big\{\frac{-\Delta^2}{C(t-s)\varepsilon}\Big\}.
\end{equation}
By Markov inequality,
\begin{equation}\label{expE}
\bb P_{\mu^n}\Big[\int_s^t \pm V_r^\varepsilon\,-\,\frac{\gamma}{2}\, U(G_r) dr>\Delta\Big]\,\leq\, e^{-\frac{\Delta}{\gamma}}\bb E_{\mu_n}\Big[\exp\Big\{\int_s^t \pm \frac{V_r^\varepsilon}{\gamma}\,-\,\frac{U(G_r)}{2} dr\Big\}\Big].
\end{equation}
If we can prove
\begin{equation}\label{expbound}
\begin{split}
&\bb E_{\mu^n}\Big[\exp\Big\{\int_s^t \pm \frac{V_r^\varepsilon}{\gamma}\,-\,\frac{U(G_r)}{2} dr\Big\}\Big]\\
\leq\, &\exp\Big\{ \frac{4\|\phi\|_\infty^2 M^2}{\varepsilon_0} \frac{\varepsilon(t-s)}{\gamma^2}\sup_{0\leq t\leq T}\big(\varphi'(\rho_t(0))\big)^2  \Big\},
\end{split}
\end{equation}
then we claim that \eqref{midexp} holds by choosing 
$$B=8\|\phi\|_\infty^2 M^2\varepsilon_0^{-1}\sup_{0\leq t\leq T}\big(\varphi'(\rho_t(0))\big)^2.$$
 Indeed, since we have chosen $\gamma=B(t-s)\varepsilon\Delta^{-1}$, a direct computation gives
$$\frac{4\|\phi\|_\infty^2 M^2}{\varepsilon_0} \frac{\varepsilon(t-s)}{\gamma^2}\sup_{0\leq t\leq T}\big(\varphi'(\rho_t(0))\big)^2\,= \,\frac{\Delta}{2\gamma},$$
thus the right hand side of \eqref{expE} becomes $\exp\{-\Delta/2\gamma\}$.

It remains to prove \eqref{expbound}. By Lemma A.2 of \cite{jm} and inequality \eqref{relD}, the logarithm of the expectation at the left hand side of \eqref{expbound} is bounded by 
\begin{equation}\label{stbound}
\begin{split}
\int_s^t \Big\{\sup_{f} &\frac{-n^2}{2}D(\sqrt{f};d\mu_r^n)+\int\frac{\pm V^\varepsilon_r}{\gamma}fd\mu_r^n\\
&+\,\frac{1}{2} \int\big(L_{n,r}^\star 1\,-\,\frac{d}{dr}\log\Psi_r^n-U(G_r)\big)f d\mu_r^n \Big\}dr,
\end{split}
\end{equation}
where the supremum is taken over all the density $f$ with respect to the reference measure $\mu_r^n$, $L_{n,r}^\star$ is the adjoint operator of $L_n$ with respect to $\mu_r^n$ and $\Psi_r^n$ is the Radon-Nikydim derivative of $\mu_r^n$ with respect to $\nu^n_{1/2}$.

In section A.3 of \cite{jm} it is shown that
\begin{equation*}
\Big(L_{n,r}^\star 1\,-\,\frac{d}{dr}\log\Psi_r^n\Big)(\eta)\,=\, \frac{1}{n^2}\sum_{x\in\bb T_n} w(x) R_r^n(x)\,+\, \sum_{x\in\bb T_n}w(x) w(x+1) G^n_r(x)
\end{equation*}
where $R_r^n:\bb T_n\to\bb R$ is a function such that 
$$\sup_{x\in\bb T_n}|R_r^n(x)|\,\leq\,\sup_{u\in\bb T} \frac{d}{du^4}\rho_r(u)$$
 and 
\begin{equation*}
\begin{split}
G^n_r(x)\,=\,&n\Big[\rho_r\big(\frac{x+1}{n}\big)-\rho_r\big(\frac{x}{n}\big)\Big] F\big(\frac{x}{n}\big)\Big[\rho_r\big(\frac{x+1}{n}\big)+\rho_r\big(\frac{x}{n}\big)-2\rho_r\big(\frac{x+1}{n}\big)\rho_r\big(\frac{x}{n}\big)\Big]\\
 -\,&n^2\Big[\rho_r\big(\frac{x+1}{n}\big)\,-\,\rho_r\big(\frac{x}{n}\big)\Big]^2.
\end{split}
\end{equation*}
Clearly $\|G^n_r\|_\infty$ is bounded uniformly over $n$ and $r\in[0,T]$. This function $G^n_r$ is our choice of $G_r:\bb T_n\to\bb R$ anticipated in \eqref{eqtone} and \eqref{eqttwo}. 

For every density $f$ with respect to $\mu_r^n$, since $|w(x)|\,\leq C(\varepsilon_0)$ and $\sup_{u\in\bb T} \frac{d}{du^4}\rho_r(u)$ is uniformly bounded, 
$$\frac{1}{n^2}\int_s^t\int \sum_{x\in\bb T_n} w(x) R_r^n(x) fd\mu_r^ndr\,=\,O(n^{-1}).$$
 On the other hand, by inequality \eqref{v2}, since $\delta$ was chosen to be $1/4$,
$$\int \sum_{x\in\bb T_n}w(x) w(x+1)G^n_r(x) fd\mu_r^n\,\leq\,\int U(G_r)fd\mu_r^n\,+\,\frac{n^2}{4} D(\sqrt{f};\mu_r^n)$$
 for any density $f$ with respect to the measure $\mu_r^n$. 

Estimates above allow us to conclude that
\begin{equation*}
\int_s^t \sup_{f}\Big\{-\frac{n^2}{4}D(\sqrt{f};d\mu_r^n)+\,\frac{1}{2} \int\big(L_{n,r}^\star 1\,-\,\frac{d}{dr}\log\Psi_r^n-U(G_r)\big)f d\mu_r^n \Big\}dr
\end{equation*}
vanishes as $n\to\infty$.
From Corollary \ref{degonerep} by choosing $b=\pm \gamma^{-1}$, since $ q_r\geq M^{-1}$ for all $0\leq r\leq T$, we get 
\begin{equation*}
\begin{split}
&\int_s^t \sup_{f} \Big\{ \frac{-n^2}{4}D(\sqrt{f};d\mu_r^n)\,+\, \int\frac{\pm V^\varepsilon_r}{\gamma}fd\mu_r^n \,\Big\}dr\\
\leq\, &\frac{4\|\phi\|_\infty^2 M^2}{\varepsilon_0} \frac{\varepsilon(t-s)}{\gamma^2}\sup_{0\leq t\leq T}\big(\varphi'(\rho_t(0))\big)^2.
\end{split}
\end{equation*}
This proves \eqref{expbound} and hence the lemma.
\end{proof}

\begin{remark}
We applied Lemma A.2 of \cite{jm} to obtain the upper bound stated in \eqref{stbound}. The original statement of that Lemma is limited to the setting $s=0$. It is easy to extend the lemma to $s>0$, which is actually needed in the proof of the previous Theorem. We just need to consider a time-shifted version of the original Markov chain:
$$\xi_{tn^2}(\cdot)=\eta_{(t+s)n^2}(\cdot), \quad \text{for all}\,\, t\geq 0,$$
and then apply Lemma A.2 of \cite{jm} to the new process \{$\xi_{tn^2}: 0\leq t\leq T-s\}$.
\end{remark}

\begin{lemma}\label{dif}
 Let $\nu^n_{\rho(\cdot)}$ be the Bernoulli product measure with slowly  varying parameter associated to a smooth profile $\rho(\cdot)$ which takes value in $(\varepsilon_0,1-\varepsilon_0)$.  Assume that $f$ is density with respect to $\nu_{\rho(\cdot)}^n$ such that $H(f;\nu_{\rho(\cdot)}^n)=o(\sqrt{n})$. Fix $a\in\bb R$, $q>0$, $\varepsilon\in(0,1)$ and a function $g:\Omega_n\to [-1,1]$ whose support is disjoint with $[0,\varepsilon n]\cap\bb T_n$. Then for every density $f$ with respect to $\nu_{\rho(\cdot)}^n$ we have
\begin{equation*}
\begin{split}
\limsup_{n\to\infty}\Big\{&\int  a\sqrt{n}\Big\{\frac{\overline{\eta}(0)}{q}-\frac{1}{n}\sum_{x=1}^{n}\phi_\varepsilon\big(\frac{x}{n}\big) w(x)\frac{\mc X(\rho(0))}{q}\Big\}gf d\nu_{\rho(\cdot)}^n\,-\, \frac{n^2}{8}D(\sqrt{f};\nu_{\rho(\cdot)}^n)\Big\}\\
&\leq\,\frac{4a^2\varepsilon \|\phi\|_\infty^2}{\varepsilon_0 q^2}.
\end{split}
\end{equation*}

\end{lemma}
\begin{proof}
Since $\phi$ is smooth and has integral $1$, applying a Taylor expansion,  one can obtain that 
$$\Big|1-\frac{1}{n}\sum_{x=1}^n\phi_\varepsilon\big(\frac{x}{n}\big)\Big|\,\leq\,\frac{\|\phi'\|_\infty}{\varepsilon n}.$$
This combining the assumption $|g|\leq 1$ implies that
$$\int  a\sqrt{n}\Big\{\frac{\overline{\eta}(0)}{q}-\frac{1}{n}\sum_{x=1}^{n}\phi_\varepsilon\big(\frac{x}{n}\big) w(x)\frac{\mc X(\rho(0))}{q}\Big\}gf d\nu_{\rho(\cdot)}^n$$ is less than or equal to 
\begin{equation}\label{longsum}
\int \frac{a\mc X(\rho(0))\sqrt{n}}{qn}\sum_{x=1}^{ n-1}\phi_\varepsilon\big(\frac{x}{n}\big)\Big\{w(0)\,-\, w(x)\Big\}gf d\nu_{\rho(\cdot)}^n
\end{equation}
added with an error term $\frac{a\sqrt{n}\|\phi'\|_\infty^2}{\varepsilon nq}$ which vanishes as $n\to\infty$. Recall that $\phi_\varepsilon$ has a compact support in $(0,\varepsilon)$. Writting each $w(0)\,-\, w(x)$ as a telescope sum, the expression in \eqref{longsum} becomes 
$$\int \frac{a\mc X(\rho(0))\sqrt{n}}{q n}\sum_{x=0}^{\varepsilon n}\sum_{y=x+1}^{\varepsilon n}\phi_\varepsilon\big(\frac{y}{n}\big)\big(w(x)-w(x+1)\big) gf d\nu_{\rho(\cdot)}^n.$$
 By Lemma E.3 in \cite{jm}, since $|g|\leq 1$ and $g$ is not supported on $[0,\varepsilon n]$,     for any $\delta_x>0$, $x\in\bb T_n$, the previous expression is bounded by 
\begin{equation}\label{sumA}
\begin{split}
\frac{\mc X(\rho(0))\sqrt{n}}{q n}&\sum_{x=0}^{ \varepsilon n}\sum_{y=x+1}^{\varepsilon n}\phi_\varepsilon\big(\frac{y}{n}\big)\Big\{ \delta_x n^2D_{x,x+1}(\sqrt{f};\nu_{\rho(\cdot)}^n)\,+\, \frac{4a^2}{\delta_x \varepsilon_0 n^2}\\
\,-\, &a\Big[\rho\big(\frac{x+1}{n}\big)-\rho\big(\frac{x}{n}\big)\Big]\int w(x)w(x+1)gfd\nu_{\rho(\cdot)}^n\Big\}.
\end{split}
\end{equation}
In the above formula, $D_{x,x+1}(\sqrt{f};\nu_{\rho(\cdot)}^n)$ is the Dirichelet form of $\sqrt{f}$ with respect to $\nu_{\rho(\cdot)}^n$ corresponding to the jumps between sites $x$ and $x+1$,
$$D_{x,x+1}(\sqrt{f};\nu_{\rho(\cdot)}^n)\,=\, \sum_{\eta\in\Omega_n}\big[\sqrt{f(\eta^{x,x+1})}\,-\,\sqrt{f(\eta)}\big]^2 \nu_{\rho(\cdot)}^n.$$

Taking 
$$\delta_x=\frac{qn}{16 \mc X(\rho(0))\sqrt{n}\sum_{y=x+1}^{\varepsilon n+1}\phi_\varepsilon\big(\frac{y}{n}\big)}\,,$$ 
using the fact 
$$\sum_{y=1}^{\varepsilon n}\phi_{\varepsilon}\big(\frac{y}{n}\big)\,\leq\, n\|\phi\|_\infty,$$
 the sum of the first two terms in \eqref{sumA} is less than or equal to 
$$\frac{n^2}{16}D(\sqrt{f};\nu_{\rho(\cdot)}^n)\,+\,\frac{64\varepsilon a^2\big(\mc X(\rho(0))\big)^2\|\phi\|_\infty^2}{q^2\varepsilon_0}.$$ 
On the other hand, choosing 
$$G(x)\,=\,-\mathds{1}_{\{0\leq x\leq \varepsilon n\}}sgn\Big(a\Big[\rho\big(\frac{x+1}{n}\big)-\rho\big(\frac{x}{n}\big)\Big]\Big)$$
and $\delta=\frac{1}{8}$ in Lemma \ref{maing}, we have 
$$\sum_{x=0}^{\varepsilon n} G(x)\int w(x)w(x+1)gfd\nu_{\rho(\cdot)}^n\,\leq\, \frac{n^2}{8} D(\sqrt{f};\nu_{\rho(\cdot)}^n)\,+\, C\big(H(f;\nu_{\rho(\cdot)}^n)\,+\, 8\big).$$
Therefore the third term in \eqref{sumA}
\begin{equation*}
\begin{split}
-&\frac{a\mc X(\rho(0))\sqrt{n}}{q n}\sum_{x=0}^{ \varepsilon n}\sum_{y=x+1}^{\varepsilon n}\phi_\varepsilon\big(\frac{y}{n}\big)\Big[\rho\big(\frac{x+1}{n}\big)-\rho\big(\frac{x}{n}\big)\Big]\int w(x)w(x+1)gfd\nu_{\rho(\cdot)}^n\\
\leq\, &\frac{\lvert a\rvert\mc X(\rho(0))\sqrt{n}}{q n}\frac{\kappa n\|\phi\|_\infty}{n}\Big\{ \frac{1}{8} n^2 D(\sqrt{f};\mu_t^n)\,+\, C\big(H(f;\nu_{\rho(\cdot)}^n)\,+\, 8\big)\Big\}\\
\leq\, &\frac{\lvert a\rvert\mc X(\rho(0)) C(\kappa)\|\phi\|_\infty}{q\sqrt{n}} \big\{n^2D(\sqrt{f};\nu_{\rho(\cdot)}^n)\,+\, H(f;\nu_{\rho(\cdot)}^n)\,+\,8\}.
\end{split}
\end{equation*}

In summary, we have proved that 
\begin{equation*}
\begin{split}
&a\sqrt{n}\Big\{\frac{\overline{\eta}(0)}{q}-\frac{1}{n}\sum_{x=1}^{n}\phi_\varepsilon\big(\frac{x}{n}\big) w(x)\frac{\mc X(\rho(0))}{q}\Big\}gf d\nu_{\rho(\cdot)}^n\,-\, \frac{n^2}{8}D(\sqrt{f};\nu_{\rho(\cdot)}^n)\\
\leq\,&\Big(\frac{a\mc X(\rho(0)) C(\kappa)\|\phi\|_\infty}{q\sqrt{n}}\,-\,\frac{1}{16}\Big) n^2D(\sqrt{f};\nu_{\rho(\cdot)}^n)\,+\,\frac{64\varepsilon a^2\big(\mc X(\rho(0))\big)^2\|\phi\|_\infty^2}{q^2\varepsilon_0}\\
&\,+\,\frac{a\mc X(\rho(0)) C(\kappa)\|\phi\|_\infty}{q\sqrt{n}} \big\{H(f;\nu_{\rho(\cdot)}^n)\,+\,8\}\,+\,\frac{a\sqrt{n}\|\phi'\|_\infty^2}{\varepsilon nq}.
\end{split}
\end{equation*}
Under the assumption $H(f;\mu_t^n)=o(\sqrt{n})$ and by the fact $\mc X(\rho(0))\leq \frac{1}{4}$, taking the limit $n\to\infty$, the first, third and fourth terms at the right hand side of the above inequality either vanish or become negative, thus we finish the proof.
\end{proof}
\begin{corollary}\label{degonerep}
Fix $T>0$ and $b\in\bb R$. Then for every $0\leq t\leq T$, every density $f$ with respect to $\mu_t^n$, we have
\begin{equation*}
\limsup_{n\to\infty}\Big\{b\int V_t^\varepsilon f d\mu_t^n\,-\, \frac{n^2}{4}D(\sqrt{f};\mu_t^n)\Big\}\leq\,\frac{4\varepsilon b^2\|\phi\|_\infty^2\big(\varphi'(\rho_t(0))\big)^2}{\varepsilon_0 q_t^2}.
\end{equation*}
\end{corollary}
\begin{proof}
Recall that
$$V^\varepsilon_t(\eta_{tn^2})\,=\,\sum_{|z|\leq R} c_t(\{z\})\sqrt{n}\Big\{\frac{\overline{\eta}_{tn^2}(z)}{q_t}\,-\,\frac{1}{n}\sum_{x=1}^{n}\phi_\varepsilon\big(\frac{x}{n}\big) w_{tn^2}(x)\frac{\mc X(\rho_t(0))}{q_t}\Big\}.$$
Taking $g=1$ and $a=b\sum_{|z|\leq R} c_t(\{z\})$ in Lemma \ref{dif}, in view of claim \eqref{claimdec}, we have  
\begin{equation*}
\begin{split}
\limsup_{n\to\infty}\Big\{b\int  &\sum_{|z|\leq R} c_t(\{z\})\sqrt{n}\Big\{\frac{\overline{\eta}_{tn^2}(0)}{q_t}-\frac{1}{n}\sum_{x=1}^{n}\phi_\varepsilon\big(\frac{x}{n}\big) w_{tn^2}(x)\frac{\mc X(\rho(0))}{q_t}\Big\}f d\mu_t^n\\
-\, &\frac{n^2}{8}D(\sqrt{f};\mu_t^n)\Big\}\,\leq\,\frac{4 \varepsilon b^2\|\phi\|_\infty^2\big(\varphi'(\rho_t(0))\big)^2}{\varepsilon_0 q_t^2}.
\end{split}
\end{equation*}
To conclude the corollary, it remains to show that
\begin{equation}\label{centerest}
\limsup_{n\to\infty}\Big\{b\int \sum_{|z|\leq R} c_t(\{z\})\sqrt{n}\Big\{\frac{\overline{\eta}_{tn^2}(z)}{q_t}\,-\,\frac{\overline{\eta}_{tn^2}(0)}{q_t}\Big\} f d\mu_t^n\,-\,\frac{n^2}{8}D(\sqrt{f};\mu_t^n)\Big\}\,=\,0.
\end{equation}

Since $\rho_t$ is smooth, after a Taylor expansion to $\rho_t(\cdot)$, we see 
\begin{equation}\label{valueest}
 \int b\sum_{|z|\leq R} c_t(\{z\})\sqrt{n}q_t^{-1}\Big\{ \rho_t\big(\frac{z}{n}\big)\,-\, \rho_t (0)\Big\} f d\mu_t^n 
\end{equation}
is of order $O(n^{-1/2})$, thus it vanishes as $n\to\infty$. 

On the other hand, for every $x\in\bb Z$, a change of varaibles gives
\begin{equation}\label{changev}
\begin{split}
&\int [\eta(x+1)-\eta(x)]f(\eta) d\mu_t^n\\
=\, &\int \eta(x)[f(\eta^{x,x+1})-f(\eta)]d\mu_t^n\,+\, \int \eta(x)f(\eta^{x,x+1})\Big[ \frac{\mu_t^n(\eta^{x,x+1})}{\mu_t^n(\eta)}\,-\,1\Big] d\mu_t^n.
\end{split}
\end{equation}
Since $\mu_t^n$ was defined as the Bernoulli product measure with slowing varying smooth profile $\rho_t(\cdot)$,  a direct compuation shows that
\begin{equation}\label{rn}
\Big|  \frac{\mu_t^n(\eta^{x,x+1})}{\mu_t^n(\eta)}\,-\,1\Big|\,\lesssim\, \frac{1}{n}.
\end{equation}
This implies that the second term at the right hand side of \eqref{changev}  is bounded by 
$$\frac{C}{n} \int f(\eta^{x,x+1})d\mu_t^n\,=\, \frac{C}{n} \int f(\eta)  \frac{\mu_t^n(\eta^{x,x+1})}{\mu_t^n(\eta)}d\mu_t^n\,\leq\, \frac{C}{n}.$$
Using elementary inequality 
$$a-b\,\leq\,A(\sqrt{a}-\sqrt{b})^2\,+\, \frac{(\sqrt{a}+\sqrt{b})^2}{4A}\,\leq\, A(\sqrt{a}-\sqrt{b})^2\,+\, \frac{a+b}{2A}$$
for any $a,b,A>0$, by \eqref{rn}, we estimate the first term at the right hand side of \eqref{changev} by 
$$AD_{x,x+1}(\sqrt{f};d\mu_t^n)\,+\, \frac{2+C/n}{2A},\quad \forall \,\,A>0.$$
It follows from estimates above that 
$$\int [\eta(x+1)-\eta(x)]f(\eta) d\mu_t^n \,\leq\,  AD_{x,x+1}(\sqrt{f};d\mu_t^n)\,+\,\frac{C}{A}.$$
Writting $\eta(z)-\eta(0)$ as a telescope sum $\sum_{x=0}^{z-1}[\eta(x+1)-\eta(x)]$, using the previous estimate, we have
$$b\int \sum_{|z|\leq R} c_t(\{z\})\sqrt{n}\Big\{\frac{\eta_{tn^2}(z)}{q_t}\,-\,\frac{\eta_{tn^2}(0)}{q_t}\Big\} f d\mu_t^n\,\leq\, AD(\sqrt{f};\mu_t^n)\,+\, \frac{Cn}{A},$$
where the constant $C$ depends on $b$ and $c_t(\{z\})$ with $|z|\leq R$. Choosing $A=n^{3/2}$, we can conclude that 
\begin{equation}\label{variableest}
\limsup_{n\to\infty}\Big\{b\int \sum_{|z|\leq R} c_t(\{z\})\sqrt{n}\Big\{\frac{\eta_{tn^2}(z)}{q_t}\,-\,\frac{\eta_{tn^2}(0)}{q_t}\Big\} f d\mu_t^n\,-\,\frac{n^2}{8}D(\sqrt{f};\mu_t^n)\Big\}\,=\,0.
\end{equation}

\eqref{centerest} follows from \eqref{valueest} and \eqref{variableest} and we finish the proof.

\end{proof}

\subsection{Estimate of the second term in \eqref{dif4terms}}\label{sec35}
\begin{lemma}\label{termtwo}
There exists a constant $C$ independent of $n$ and $\varepsilon$ such that 
$$\bb P_{\mu^n}\Big[ \Big\lvert  \int_s^t \sum_{\substack{A\subset[-R,R]\cap\bb Z\\|A|\geq 2}} c_r(A)W_{r,A}^{\varepsilon}dr\Big\rvert>\Delta\Big]\,\leq\, \frac{C(t-s)\varepsilon }{\Delta^2}$$
for all $\Delta>0$.
\end{lemma}
\begin{proof}
Note that there are finitely many subsets $A\subset [-R,R]\cap \bb Z$ such that $|A|\geq 2$ because $R$ is finite. Since $c_r(A)$ is uniformly bounded over all $r\in[0,T]$, it is enough to show that 
$$\bb P_{\mu^n}\Big[ \Big\lvert \int_s^t c_r(A)W_{r,A}^{\varepsilon}\,dr\Big\rvert>\Delta\Big]\,\leq\, \frac{C(t-s)\varepsilon }{\Delta^2}$$
for every $A\subset [-R,R]\cap \bb Z$ such that $|A|\geq 2.$
Similar to what we did in Lemma \ref{termone}, we are going to show that
\begin{equation}\label{degtwoeqtone}
\bb P_{\mu^n}\Big[\int_s^t \pm c_r(A)W_{r,A}^{\varepsilon}\,-\,\frac{\gamma}{2}\, U(G_r) dr >\Delta\Big]\,\leq\, \frac{C(t-s)\varepsilon }{\Delta^2}
\end{equation}
\begin{equation}\label{degtwoeqttwo}
\bb P_{\mu^n}\Big[\Big\lvert\int_s^t \frac{\gamma}{2}\, U(G_r)dr\Big\rvert>\Delta\Big]\,\leq\,  \frac{C(t-s)\varepsilon }{ \Delta^2}
\end{equation}
for some constants $\gamma>0$, and a function $G_r:\bb T_n\to \bb R$ to be chosen later.

 It was shown in the proof of Lemma \ref{termone} that, \eqref{degtwoeqttwo} holds if $\|G_r\|_\infty$ is uniformly bounded over $r\in[0,T]$ and $\gamma=B(t-s)\varepsilon\Delta^{-1}$ for some constant $B$ that will be determined later. For the proof of \eqref{degtwoeqtone}, in view of how we dealt with inequality \eqref{eqtone}, it is enough to show that 
\begin{equation}\label{intcW}
\begin{split}
&\int_s^t \sup_{f}\Big\{-\frac{n^2}{4}D(\sqrt{f};d\mu_r^n)\,+\, \int\pm c_r(A)W_{r,A}^{\varepsilon} d\mu_r^n \Big\}dr\\
\leq\, &\frac{4(t-s)\varepsilon \|\phi\|_\infty^2M^2}{\varepsilon_0 }\Big(\sup_{0\leq r\leq T}c_r(A)\Big)^2,
\end{split}
\end{equation}
then choose 
$$B\,=\,\frac{8 \|\phi\|_\infty^2M^2}{\varepsilon_0 }\Big(\sup_{0\leq r\leq T}c_r(A)\Big)^2.$$
Note that \eqref{intcW} is just a simple consequence of Lemma \ref{Wrepestimate}.
\end{proof}

\begin{lemma}\label{Wrepestimate}
Fix $T>0$, $a\in\bb R$ and $A\subset\bb Z$. Then, for every $0\leq r\leq T$,   every density $f$ with respect to $\mu_r^n$,  we have
\begin{equation*}
\limsup_{n\to\infty}\Big\{\int  a W_{r,A}^\varepsilon f d\mu_r^n\,-\, \frac{n^2}{4}D(\sqrt{f};\mu_r^n)\Big\}\,\leq\,\frac{4a^2\varepsilon \|\phi\|_\infty^2}{\varepsilon_0 q_r^2}.
\end{equation*}
\end{lemma}
\begin{proof}
Recall the definition of $W_{r,A}^{\varepsilon}$ given in \eqref{defW}. We are going to first estimate the cost of performing the replacement on $\overline{\eta}(A_r)$, then estimate the cost of the replacement on $\overline{\eta}(A_\ell)$.

In view of Lemma \ref{dif}, since $\overline{\eta}(A\backslash\{A_r\})$ is absolutely bounded by $1$ and has a support disjoint with $[A_r,A_r+\varepsilon n]\cap\bb Z$,  we have
\begin{equation*}
\begin{split}
\limsup_{n\to\infty}\Big\{ &a\int\sqrt{n}\Big [\frac{\overline{\eta}(A)}{q_r}\,-\,\frac{\overline{\eta}(A\backslash\{A_r\})}{q_r}  \Big(\frac{1}{n}\sum_{x=1}^{n}\phi_\varepsilon\big(\frac{x}{n}\big) w(x+A_r)\mc X(\rho_r\big(\frac{A_r}{n}\big))\Big)\Big ]f d\mu_r^n\\
-\,&\frac{n^2}{8}D(\sqrt{f};\mu_r^n)\Big\}\,\leq\,\frac{4a^2\varepsilon \|\phi\|_\infty^2}{\varepsilon_0 q_r^2}.
\end{split}
\end{equation*}
To conclude the proof of this lemma, it remains to show 
\begin{equation}\label{repdegtwo}
\begin{split}
\limsup_{n\to\infty}\Big\{ &\frac{a\sqrt{n}}{q_r}\int\Big [\overline{\eta}(A_l)\,-\,\frac{1}{n^{1/6}}\sum_{x=1}^{n^{1/6}} w(-x+A_\ell) \mc X(\rho_r\big(\frac{A_\ell}{n}\big))\Big] \overline{\eta}(A\backslash\{A_l,A_r\})\\
\times \Big(\frac{1}{n}\sum_{x=1}^{n}&\phi_\varepsilon\big(\frac{x}{n}\big) w(x+A_r)\mc X(\rho_r\big(\frac{A_r}{n}\big))\Big)f d\mu_r^n\,-\,\frac{n^2}{8}D(\sqrt{f};\mu_r^n)\Big\}\,\leq\,0.
\end{split}
\end{equation}
Writting $w(A_l)-w(-x+A_l)$ as a telescope sum, we have
\begin{equation*} 
\begin{split}
&\overline{\eta}(A_l)\,-\,\frac{1}{n^{1/6}}\sum_{x=1}^{n^{1/6}} w(-x+A_\ell) \mc X(\rho_r\big(\frac{A_\ell}{n}\big))\\
=\,& \mc X(\rho_r\big(\frac{A_\ell}{n}\big))\sum_{x=1}^{n^{1/6}}\big[w(A_l-x+1)-w(A_l-x)\big]\sum_{y=x}^{n^{1/6}} \frac{n^{1/6}-y+1}{n^{1/6}}.
\end{split}
\end{equation*}
By the integration by parts formula obtained in Lemma E.3 in \cite{jm}, using the fact
$$R(A)\,:=\, \overline{\eta}(A\backslash\{A_l,A_r\})\frac{1}{n}\sum_{x=1}^{n}\phi_\varepsilon\big(\frac{x}{n}\big) w(x+A_r)\,\leq\, \frac{2\|\phi\|_\infty }{\varepsilon_0},$$
  the first term inside the limsup in \eqref{repdegtwo} is bounded by
\begin{equation}\label{bypartsdegtwo}
\begin{split}
&\frac{\mc X(\rho_r(A_l))\mc X(\rho_r(A_r))\sqrt{n}}{q_r}\Big\{\sum_{x=A_l-n^{1/6}}^{ A_l-1} \delta n^2D_{x,x+1}(\sqrt{f};\mu_r^n)\,+\, \frac{4a^2 }{ \delta\varepsilon_0 n^2}\frac{4\delta_x^2\|\phi\|_\infty^2}{\varepsilon_0^2}\\
&\,-\, a\delta_x\Big[\rho_r\big(\frac{x+1}{n}\big)-\rho_r\big(\frac{x}{n}\big)\Big]\int R(A)w(x)w(x+1)fd\mu_r^n\Big\}
\end{split}
\end{equation}
for any $\delta>0$, where 
$$\delta_x\,=\,\sum_{y=A_l-x}^{n^{1/6}} \frac{n^{1/6}-y+1}{n^{1/6}}.$$
Choosing $\delta=n^{-2/3}$, using the bound $\delta_x\,\leq\, n^{1/6}$,  the expression in \eqref{bypartsdegtwo} is bounded by
$$C\Big\{n^{11/6}D(\sqrt{f};\mu_r^n)\,+\,n^{-1/3}\Big\},$$
where $C$ is a constant depending on $a, \rho_r, q_r, \varepsilon_0$ and $\phi$. This proves \eqref{repdegtwo}.
\end{proof}

\section{Tightness }\label{sec4}
\subsection{Tightness of $\{Z_t^{h,\varepsilon}: t\in [0,T]\}_{\varepsilon}$}\label{tightZt}

We start by introducing the concept of subgaussian variables.  We say that a real-valued random variable $X$ is \textit{subgaussian} of order $\sigma^2$, if for every $\theta\in \bb R$,
$$\log E[e^{\theta X}]\,\leq\,\frac{1}{2}\sigma^2\theta^2.$$
As a simple consequence of Hoeffding's Lemma(see Lemma \ref{Hf} in Appendix), if $X$ is a Bernoulli random variable with mean $\rho$, then $X-\rho$ is subgaussian of order $1/4$. From \eqref{subgw}  we can easily deduce that, there exists a constant $C=C(\varepsilon_0,\phi)$ such that 
\begin{equation}\label{subw}
\frac{1}{\sqrt{n}} \phi_\varepsilon\big(\frac{x}{n}\big) w(x) \quad \text{is subgaussian of order} \quad \frac{C}{\varepsilon^2 n},
\end{equation}
for every $0\leq x\leq \varepsilon n$.
 We refer to Appendix F.3 of \cite{jm} for a more detailed discussion on subgaussian variables.

\begin{lemma}\label{Z}
Fix $0\leq s < t \leq T$. Then for every $\lambda\in(1,2)$, there exists a positive constant $C>0$ independent of $n,\varepsilon,t$ and $s$  such that
$$\bb E_{\mu^n}\Big[\big|Z_{t,n}^{h,\varepsilon}\,-\,Z_{s,n}^{h,\varepsilon}\big|^\lambda\Big]\,\leq\, C|t-s|^\lambda\varepsilon^{-\lambda/2}$$
for all $n\in\bb N$.
\end{lemma}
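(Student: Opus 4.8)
The plan is to reduce the assertion to a uniform second moment bound on the integrand of $Z_{t,n}^\varepsilon$, and then to control that second moment by transferring from the Bernoulli reference measure (where the relevant quantity is a sum of independent subgaussian variables) to the non-equilibrium law via the entropy inequality. First I would rewrite the integrand: since $\phi$ is supported in $(0,1)$ and $\overline\eta_{rn^2}(x)=w_{rn^2}(x)\,\mc X(\rho_r(x/n))$ by the definition of $w$, the factor $\mc X(\rho_r(x/n))$ in the denominator cancels, so that
$$X_r^n\Big(\frac{\phi_\varepsilon(\cdot)\mc X(\rho_r(0))}{q_r\mc X(\rho_r(\cdot))}\Big)\,=\,\sum_{x=1}^{\lceil\varepsilon n\rceil}\frac{1}{\sqrt n}\,\phi_\varepsilon\Big(\frac{x}{n}\Big)\,w_{rn^2}(x)\,\frac{\mc X(\rho_r(0))}{q_r}\,=:\,\Xi_r^{\varepsilon,n}.$$
Writing $Z_{t,n}^\varepsilon-Z_{s,n}^\varepsilon=\int_s^t\Xi_r^{\varepsilon,n}\,dr$ and applying Jensen's inequality to the normalized measure $dr/(t-s)$ and the convex map $x\mapsto|x|^\lambda$, then Lyapunov's inequality (valid since $\lambda<2$), one gets
$$\bb E_{\mu^n}\big[|Z_{t,n}^\varepsilon-Z_{s,n}^\varepsilon|^\lambda\big]\,\le\,(t-s)^{\lambda-1}\int_s^t\bb E_{\mu^n}\big[(\Xi_r^{\varepsilon,n})^2\big]^{\lambda/2}\,dr.$$
Thus it suffices to prove $\bb E_{\mu^n}[(\Xi_r^{\varepsilon,n})^2]\le C(\varepsilon_0,M)\,\varepsilon^{-1}$, uniformly in $r\in[0,T]$ and $n$, which then yields the bound $C^{\lambda/2}|t-s|^\lambda\varepsilon^{-\lambda/2}$.

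Next I would estimate the second moment under the product reference measure $\mu_r^n$. Since $\mu_r^n$ is Bernoulli product with $\bb E_{\mu_r^n}[\eta(x)]=\rho_r(x/n)$, the variables $\{w_{rn^2}(x)\}_x$ are independent and centered, so $\Xi_r^{\varepsilon,n}$ is a sum of at most $\lceil\varepsilon n\rceil$ independent centered terms, each subgaussian of order $C(\varepsilon_0,M)(n\varepsilon^2)^{-1}$ by \eqref{subw} (the case $\varepsilon n<1$ being trivial). Since the subgaussian order of a sum of independent centered variables is the sum of the orders, $\Xi_r^{\varepsilon,n}$ is subgaussian of order $\sigma^2\le 2C(\varepsilon_0,M)\varepsilon^{-1}$ under $\mu_r^n$; in particular $\bb E_{\mu_r^n}[(\Xi_r^{\varepsilon,n})^2]\le\sigma^2$ and $\bb E_{\mu_r^n}[\exp\{\gamma(\Xi_r^{\varepsilon,n})^2\}]\le(1-2\gamma\sigma^2)^{-1/2}$ for $\gamma<(2\sigma^2)^{-1}$ by the standard Gaussian-type bound on the square of a subgaussian variable. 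To pass to $\bb P_{\mu^n}$, let $f_r^n$ be the density of the law of $\eta_{rn^2}$ with respect to $\mu_r^n$; by Theorem \ref{entropy} and the standing assumption $H_n(0)=O(1)$ we have $H_n(r)=H(f_r^n;\mu_r^n)=O(1)$, uniformly in $r$ and $n$. The entropy inequality with $\gamma=(4\sigma^2)^{-1}$ then gives
$$\bb E_{\mu^n}\big[(\Xi_r^{\varepsilon,n})^2\big]\,=\,\int(\Xi_r^{\varepsilon,n})^2 f_r^n\,d\mu_r^n\,\le\,\frac1\gamma\Big(H_n(r)\,+\,\log\bb E_{\mu_r^n}\big[e^{\gamma(\Xi_r^{\varepsilon,n})^2}\big]\Big)\,\le\,4\sigma^2\big(H_n(r)+\tfrac12\log2\big),$$
which is $\le C(\varepsilon_0,M)\varepsilon^{-1}$ since $\sigma^2\le 2C(\varepsilon_0,M)\varepsilon^{-1}$ and $H_n(r)=O(1)$. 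Together with the first paragraph this completes the proof. (Equivalently, combining the second moment bound with Cauchy--Schwarz in time and Markov's inequality gives $\bb P_{\mu^n}(|Z_{t,n}^\varepsilon-Z_{s,n}^\varepsilon|>\Delta)\le C|t-s|^2\varepsilon^{-1}\Delta^{-2}$, and Lemma \ref{pe} then yields the $L^\lambda$ estimate directly, mirroring the structure of the proof of Lemma \ref{ZG}.)

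The only delicate point is the transfer from $\mu_r^n$ to the non-equilibrium law, and this is precisely where the relative entropy bound $H_n(r)=O(1)$ of Theorem \ref{entropy} enters; it works smoothly here only because the subgaussianity of $\Xi_r^{\varepsilon,n}$ under $\mu_r^n$ supplies a finite exponential-square moment. Unlike Lemma \ref{ZG}, no localization, integration-by-parts or Kipnis--Varadhan-type input is needed, so this estimate is considerably softer; the single quantitative ingredient is the $\varepsilon^{-1}$ growth of the subgaussian order, which is just the sum of $\varepsilon n$ independent contributions each of order $(n\varepsilon^2)^{-1}$.
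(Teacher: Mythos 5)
Your proposal is correct, and it reaches the estimate by a somewhat different mechanism than the paper. The skeleton is shared: you both apply Jensen in the time variable to reduce to a bound at fixed time $r$, both rely on the subgaussian estimate \eqref{subw} for the summands under the product reference measure $\mu_r^n$ (the sum over $x\le\varepsilon n$ of independent terms of order $C/(n\varepsilon^2)$ giving order $C/\varepsilon$), and both transfer from $\mu_r^n$ to the actual law of $\eta_{rn^2}$ through the relative entropy bound $H_n(r)=O(1)$ of Theorem \ref{entropy}. The difference is in how the transfer is performed: the paper bounds the \emph{tail probability} of $\Xi_r^{\varepsilon,n}$ under $\bb P_{\mu^n}$ via Proposition \ref{entpro} combined with Hoeffding's inequality under $\mu_r^n$, and then converts the tail bound into an $L^\lambda$ moment through Lemma \ref{pe}, which is exactly why $\lambda<2$ is needed there; you instead bound the \emph{second moment} directly, using the classical entropy inequality $\int h f\,d\mu\le\gamma^{-1}\bigl(H(f;\mu)+\log\int e^{\gamma h}d\mu\bigr)$ with $h=(\Xi_r^{\varepsilon,n})^2$ together with the standard bound $\bb E[e^{\gamma X^2}]\le(1-2\gamma\sigma^2)^{-1/2}$ for a subgaussian $X$ of order $\sigma^2$ and $\gamma<(2\sigma^2)^{-1}$. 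This is a legitimate and in fact slightly stronger route: it yields $\bb E_{\mu^n}[(\Xi_r^{\varepsilon,n})^2]\le C\varepsilon^{-1}$ uniformly in $r,n$, hence the lemma even at $\lambda=2$, and it dispenses with Lemma \ref{pe} and Proposition \ref{entpro} altogether (the entropy inequality you invoke is not stated in the paper but is standard, e.g.\ Appendix 1 of \cite{kl}). Two minor points: as in the paper's own statement, the constant inevitably also depends on $\kappa$, $T$ and the assumed bound on $H_n(0)$ through Theorem \ref{entropy}, not only on $(\lambda,M,\varepsilon_0)$; and your closing parenthetical (second moment plus Markov plus Lemma \ref{pe}) is a correct alternative but redundant once the $L^2$ bound is in hand.
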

\begin{proof}
For every $0\leq r\leq T$, define
$$H_r\,=\,\frac{\mc X (\rho_r(0))\varphi'_h(\rho_s(0))}{q_r}.$$
The expectation to be estimated in the lemma can be written as 
$$\bb E_{\mu^n}\Big[\Big\lvert\int _s^t \frac{1}{\sqrt{n}}\sum_{x=1}^{n}\phi_\varepsilon\big(\frac{x}{n}\big) w_{rn^2}(x)H_r dr\Big\rvert^\lambda \Big]$$
By Jensen's inequality, it is bounded from above by
\begin{equation*}
(t-s)^{\lambda-1}\int_s^t\bb E_{\mu^n}\Big[\Big\lvert \frac{1}{\sqrt{n}}\sum_{x=1}^{n}\phi_\varepsilon\big(\frac{x}{n}\big) w_{rn^2}(x)H_r\Big\rvert^\lambda \Big]dr.
\end{equation*}
Therefore it is enough to prove that there exists a constant $C>0$ independent of $n$ and $r$ such that
$$\bb E_{\mu^n}\Big[\Big\lvert \frac{1}{\sqrt{n}}\sum_{x=1}^{n}\phi_\varepsilon\big(\frac{x}{n}\big) w_{rn^2}(x)H_r\Big\rvert^\lambda \Big]\,\leq\, C\varepsilon^{-\lambda/2}.$$
In view of Lemma \ref{pe}, since $\lambda>1$, we just need to show 
\begin{equation}\label{prob1}
\bb P_{\mu^n}\Big[\Big|\frac{1}{\sqrt{n}}\sum_{x=1}^{n}\phi_\varepsilon\big(\frac{x}{n}\big) w_{rn^2}(x)H_r\Big|>\Delta\Big]\,\leq\, \frac{C}{\varepsilon \Delta^2}
\end{equation}
for every $\Delta>0$.

Thanks to Proposition \ref{entpro}, the probability at the left hand side of \eqref{prob1} is less than or equal to
\begin{equation}\label{stepent}
\frac{H_n(r)\,+\,\log 2}{\log \Big(1\,+\,\mu_r^n\Big[\Big|\frac{1}{\sqrt{n}}\sum_{x=1}^{n}\phi_\varepsilon\big(\frac{x}{n}\big) w_{rn^2}(x)H_r\Big|>\Delta\Big]^{-1}\Big)}.
\end{equation}
 By \eqref{subw} and Lemma \ref{Hi} , since $\mu_r^n$ is a product measure and $|H_r|$ is uniformly bounded, we have
\begin{equation}\label{steph}
\mu_r^n\Big[\Big|\frac{1}{\sqrt{n}}\sum_{x=1}^{n}\phi_\varepsilon\big(\frac{x}{n}\big) w_{rn^2}(x)H_r\Big|>\Delta\Big]\,\leq\,2\exp\Big( -\frac{\Delta^2}{C\sum_{x=1}^{\varepsilon n} n^{-1}\varepsilon^{-2}}\Big).
\end{equation}
It follows from \eqref{stepent} and \eqref{steph} that, the probability at the left hand side of \eqref{prob1} is bounded by
$$\frac{H_n(r)\,+\,\log 2}{\log\big(1+\frac{1}{2}\exp\{C\varepsilon \Delta^2\}\big)}.$$
Recall that it is proved in Theorem \ref{entropy} that $H_n(r)$ is of order $O(1)$ for every $0\leq r\leq T$.  Using the elementary inequality 
$$\log(1+\frac{e^x}{2})\geq \frac{x}{3}, \quad \forall \,\, x\geq 0,$$
we conclude the proof of \eqref{prob1}.
\end{proof}

\begin{lemma}\label{Zdif}
For every $\lambda\in (1,2)$, there exists a positive constant $C=C(\lambda)$ independent of $\delta,\varepsilon$ and $t$ such that
$$\bb E_{\mu^n}\Big[\big\lvert Z_{t,n}^{h,\delta}\,-\,Z_{t,n}^{h,\varepsilon}\big\rvert^\lambda\Big]\,\leq\,Ct^{\frac{\lambda}{2}}\varepsilon^{\frac{\lambda}{2}}$$
for all $0<\delta<\varepsilon<1$ and all $n\in\bb N$.
\end{lemma}
\begin{proof}
Notice that the expectation in the lemma is bounded by a constant $C(\lambda)$ times the sum
\begin{equation*}
\bb E_{\mu^n}\Big[\big|\widetilde{\Gamma}^h_n(t)\,-\,Z_{t,n}^\varepsilon\big|^\lambda\Big]\,+\,\bb E_{\mu^n}\Big[\big|Z_{t,n}^\delta\,-\,\widetilde{\Gamma}^h_n(t)\big|^\lambda\Big]
\end{equation*}
 Each term can be bounded by  $Ct^{\frac{\lambda}{2}}\varepsilon^{\frac{\lambda}{2}}$ in view of Lemma \ref{ZG}, since $\delta<\varepsilon$ and $\lambda\geq 1$.
\end{proof}

We now show that the sequence $\{Z_t^{h,\varepsilon}: t\in[0,T]\}_{\varepsilon\in(0,1)}$ is tight with respect to the uniform topology in $C([0,T];\bb R)$.
\begin{theorem}\label{Ztbound}
Given $\lambda\in(1,2)$, there exists a constant $C=C(\lambda )$  such that 
\begin{equation}\label{difZt}
\sup_{\varepsilon\in(0,1)}\bb E_{\mu^n}\Big[\big|Z_{t,n}^{h,\varepsilon}\,-\,Z_{s,n}^{h,\varepsilon}\big|^\lambda\Big]\,\leq\,C (t-s)^{\frac{3\lambda}{4}}
\end{equation}
for all $0\leq s<t\leq T$ and all $n\in\bb N$.
\end{theorem}
\begin{proof}
By Lemma \ref{Z}, there exists a constant $C=C(\lambda)>0$ such that 
\begin{equation}\label{Zlambda}
\begin{split}
\bb E_{\mu_n}\Big[\big\lvert Z_t^{h,\varepsilon}\big\rvert^\lambda\Big]\,\leq\,&\,\bb E_{\mu^n}\Big[\big\lvert Z_t^{h,\varepsilon}\,-\,Z_{t,n}^{h,\varepsilon}\big\rvert^\lambda\Big] \,+\,\bb E_{\mu^n}\Big[\big\lvert Z_{t,n}^{h,\varepsilon}\big\rvert^\lambda\Big]\\
\leq\,& Ct^\lambda\varepsilon^{-\lambda/2}.
\end{split}
\end{equation}

Fix $0<\varepsilon<1$. Given $\delta<\varepsilon$, in view of Lemma \ref{Z} and Lemma \ref{Zdif}, 
\begin{equation*}
\begin{split}
\bb E_{\mu^n}\Big[\big\lvert Z_{t,n}^{h,\delta}\big\rvert^\lambda\Big]\,\leq\, &C(\lambda)\bb E_{\mu^n}\Big[\big\lvert Z_{t,n}^{h,\varepsilon}\big\rvert^\lambda\Big]\,+\, C(\lambda)\bb E_{\mu^n}\Big[\big\lvert Z_{t,n}^{h,\varepsilon}-Z_{t,n}^{h,\delta}\big\rvert^\lambda\Big]\\
\,\leq\, &Ct^\lambda\varepsilon^{-\lambda/2}\,+\, Ct^{\frac{\lambda}{2}}\varepsilon^{\frac{\lambda}{2}}.
\end{split}
\end{equation*}
If $t\geq \delta^2$, let $\varepsilon=\sqrt{t/T}$, then $\bb E_{\mu^n}\Big[\big\lvert Z_t^{h,\delta}\big\rvert^\lambda\Big]\,\leq\, C(\lambda,T)t^{\frac{3\lambda}{4}}$. If $t<\delta^2$, using \ref{Zlambda}, we still get the same upper bound of $\bb E_{\mu^n}\Big[\big\lvert Z_t^{h,\delta}\big\rvert^\lambda\Big]$.

Since we have proved a uniform bound on $H_n(t)\leq C$ for all $t\in[0,T]$ and this entropy bound is the only property of the initial measure needed to obtain the bound of $\bb E_{\mu^n}\Big[\big\lvert Z_t^{h,\delta}\big\rvert^\lambda\Big]$, shifting the time, 
\begin{equation*}
\begin{split}
\bb E_{\mu^n}\Big[\big|Z_{t,n}^{h,\varepsilon}\,-\,Z_{s,n}^{h,\varepsilon}\big|^\lambda\Big]\,=\,&\bb E_{\eta_{sn^2}}\Big[\big|Z_{t-s,n}^{h,\varepsilon}\,-\,Z_{0,n}^{h,\varepsilon}\big|^\lambda\Big]\\
\,=\,&\bb E_{\eta_{sn^2}}\Big[\big|Z_{t-s,n}^{h,\varepsilon}\big|^\lambda\Big]\,\leq\,C(t-s)^{\frac{3\lambda}{4}}.
\end{split}
\end{equation*}

\end{proof}
By Theorem \ref{CLT}, letting $n\to\infty$ at the left hand side of \eqref{difZt}, we can bound the $L^\lambda$ norm of the $Z_{t}^{h,\varepsilon}-Z_{s}^{h,\varepsilon}$ by $C (t-s)^{\frac{3\lambda}{4}}$. Choosing $\lambda\in(4/3,2)$, the tightness of $\{Z_t^{h,\varepsilon}: t\in[0,T]\}_{\varepsilon\in(0,1)}$ follows from this estimate and Kolmogorov-Centov criterion(see Problem 2.4.11 in \cite{ks}).

\subsection{Estimates on the error process}\label{ssec42}
In this subsection we will show that, for every fixed $\varepsilon\in(0,1)$, the process $\{J_n^{h,\varepsilon}(t); 0\leq t\leq T\}$ converges in law to a zero process $\{0; 0\leq t\leq T\}$ as $n\to\infty$. In addition, we provide an estimate to the $L^2$ norm of $J_n^{h,\varepsilon}(t)\,-\,J_n^{h,\delta}(s)$, which will be used in the proof of the tightness of $\{\Gamma_n^h: t\in[0,T]\}_n$ in the next subsection.

First of all, from claim \ref{claimdec}, we have
$$\sup_{0\leq t\leq T}|J_{n,1}^{h}(t)| \quad \text{is of order}\,\, O(n^{-1/2}).$$
 This implies that 
\begin{equation}\label{unifJ1}
\bb E_{\mu^n}\Big[\big\lvert J_{n,1}^{h}(t)\,-\,J_{n,1}^{h}(s)\big\rvert^\lambda\Big]\,\lesssim\, (t-s)^\lambda n^{-\lambda} .
\end{equation}
for every $\lambda>0$.
Thus  the sequence of processes $\{J_{n,1}^h(t): t\in[0,T]\}_{n\in\bb N}$ is tight with respect to the uniform topology and the limit is a zero process. In what follows we will bound the $L^2$ norm of $J_{n,i}^{h,\varepsilon}(t)-J_{n,i}^{h,\varepsilon}(t)$, $i=2,3$, by $C(t-s)$ times $n$ to some negative power. Then it follows that $\{J_n^{h,\varepsilon}(t); 0\leq t\leq T\}$ converges in law to a zero process.

We start with the analysis on the process $\{J_{n,3}^{h,\varepsilon}(t): t\in[0,T]\}$.
\begin{lemma}\label{L2J3}
For every $A\subset[-R,R]\cap\bb Z$ such that $|A|\geq 2$, there exists a constant $C$ independent of $n$ and $\varepsilon$ such that 
$$\bb E_{\mu^n}\Big[ \Big\lvert \sqrt{n}\int_s^t  c_r(A)Q_{r,A}^{\varepsilon}dr\Big\rvert^2 \Big]\,\leq\, \frac{C(t-s)^2}{n^{1/12}\varepsilon^{1/2}}.$$
\end{lemma}
\begin{proof}
By Jensen's inequality, the expectation to be estimated in the lemma is less than or equal to
$$(t-s)\int_s^t \bb E_{\mu^n}\Big[  \sqrt{n}\Big\lvert c_r(A)Q_{r,A}^{\varepsilon}\Big\rvert \Big]dr.$$
Since $c_r(A)$ is uniformly bounded over $r\in[0,T]$ and $A\subset[-R,R]\cap\bb Z$, using the fact $|\overline{\eta}(x)|\leq 1$, the expression above is bounded by
$$C(t-s) \int_s^t \bb E_{\mu^n}\Big[ \sqrt{n}\Big\lvert  \widetilde{Q}_{r,A}^\varepsilon\Big\rvert \Big]dr,$$
where 
$$\widetilde{Q}_{r,A}^\varepsilon\,=\,\Big(\frac{1}{n^{1/6}}\sum_{x=1}^{n^{1/6}} w(-x+A_l)\mc X(\rho_r\big(\frac{A_l}{n}\big))\Big) \Big(\frac{1}{n}\sum_{x=1}^{n}\phi_\varepsilon\big(\frac{x}{n}\big) w(x+A_r)\mc X(\rho_r\big(\frac{A_r}{n}\big))\Big).$$
By entropy inequality and the bound $H_n(r)\leq C$, the expectation inside the integral is bounded by
$$\frac{C}{\gamma}\,+\,\frac{1}{\gamma}\log \mu_r^n\Big[\exp\Big\{\gamma \sqrt{n}\Big\lvert\widetilde{Q}_{r,A}^\varepsilon\Big\rvert \Big\}\Big] $$
for any $\gamma>0$. 
Note that by \eqref{subgw}  and Lemma \ref{subgsum} we can easily deduce that under the Bernoulli product measure $\mu_r^n$, there exists a constant $C_1=C_1(\varepsilon_0,M,\phi)$ such that \begin{equation}\label{subwr}
\frac{1}{n}\sum_{x=0}^{\varepsilon n}\phi_\varepsilon\big(\frac{x}{n}\big) w(x)\frac{\mc X(\rho_r(0))}{q_r} \quad \text{is subgaussian of order} \quad \frac{C_1}{\varepsilon n},
\end{equation}
and a constant $C_2=C_2(\varepsilon_0,M,\phi, A)$ such that
\begin{equation}\label{subwl}
\frac{1}{n^{1/6}}\sum_{x=1}^{n^{1/6}} w(-x+A_l)\mc X(\rho_r\big(\frac{A_l}{n}\big))
 \quad \text{is subgaussian of order} \quad \frac{C_2}{n^{1/6}}
\end{equation}
for every $1\leq x\leq \varepsilon n$. 
Using elementary inequalities
$$e^{|x|}\,\leq \,e^x+e^{-x}$$
and
$$\log(a+b)\,\leq\,\log a\,+\,\log b\,+\,\log 2\quad \text{for all}\,\, a,b>0,$$
by \eqref{subwr} \eqref{subwl} and Lemma \ref{subgprod}, choosing 
$$\gamma=\frac{4n^{1/12}}{\sqrt{\varepsilon C_1C_2}},$$
  the previous expression is bounded by $Cn^{-1/12}\varepsilon^{-1/2}$.
Putting all the estimates above together, we conclude that 
$$\bb E_{\mu^n}\Big[ \Big\lvert \sqrt{n}\int_s^t  c_r(A)Q_{r,A}^{\varepsilon}dr\Big\rvert^2 \Big]\,\leq\, \frac{C(t-s)^2}{n^{1/12}\varepsilon^{1/2}}.$$

\end{proof}
Recall that $ J_{n,3}^{h,\varepsilon}(t)$ is defined as the time integral of the (finite) sum of $\sqrt{n}c_r(A)Q_{r,A}^\varepsilon$. By an triangle inequality, from the lemma above we get
\begin{equation}\label{L2J3sum}
\bb E_{\mu^n}\Big[ \Big\lvert J_{n,3}^{h,\varepsilon}(t)\,-\, J_{n,3}^{h,\varepsilon}(s)\Big\rvert^2 \Big]\,\leq\, \frac{C(t-s)^2}{n^{1/12}\varepsilon^{1/2}}.
\end{equation}

The estimate of $J_{n,2}^{h,\varepsilon}$ is similar to the proof of Lemma \ref{L2J3}. Actually it is even simplier because there is no need to deal with the exponential moment of the product of two subgaussian random variables. Here we omit the proof and state the result only.
\begin{lemma}\label{L2J2}
There exists a constant $C$ independent of $n$ and $\varepsilon$ such that
$$\bb E_{\mu^n}\Big[\big\lvert J_{n,2}^{h,\varepsilon}(t)\,-\,J_{n,2}^{h,\varepsilon}(s)\big\rvert^2\Big]\,\lesssim\, (t-s)^2 n^{-1} \varepsilon^{-1/2}.$$
\end{lemma}

If one checks carefully the proof of Theorem \eqref{Ztbound} and related previous  lemmas, the constant $C$ at the right hand side of \eqref{difZt}  depends on the absolute value of $\varphi'_h(\rho_r(0))$. The explicit expression of   $\varphi'_h(\rho_s(0))$ does not really matter. Actually $C^{1/\lambda}$ is proportional to
$$\sup_{s\leq r\leq t}\| \varphi'_h(\rho_r(0)) \|_\infty.$$ From this observation and claim \eqref{claimdec} we can conclude that, given any $\lambda\in(1,2)$, there exists a constant $C=C(\lambda)$ such that 
\begin{equation}\label{unifJ2}
\sup_{\varepsilon\in(0,1)}\bb E_{\mu^n}\Big[\big\lvert J_{n,2}^{h,\varepsilon}(t)\,-\,J_{n,2}^{h,\varepsilon}(s) \big\rvert^\lambda\Big]\,\leq\, C (t-s)^{\frac{3\lambda}{4}}n^{-\lambda},
\end{equation}
for all $n\in\bb N$.

By Lemma \ref{pe} and Lemma \ref{termtwo}, we have
\begin{equation*}
\begin{split}
\bb E_{\mu^n}\Big[\big\lvert J_{n,3}^{h,\varepsilon}(t) \,-\,\int_0^t \sum_{\substack{A\subset[-R,R]\cap\bb Z\\|A|\geq 2}} \sqrt{n} c_r(A)\frac{\overline{\eta}_{rn^2}(A)}{q_r}dr\big\rvert^\lambda\Big] 
\leq\, Ct^{\lambda/2}\varepsilon^{\lambda/2}
\end{split}
\end{equation*}
for every $\lambda\in(1,2)$. Similar to the proof of Lemma \ref{Zdif}, a triangle inequality gives
$$\bb E_{\mu^n}\Big[\big\lvert J_{n,3}^{h,\varepsilon}(t)\,-\,J_{n,3}^{h,\delta}(t)\big\rvert^\lambda\Big]\,\leq\,Ct^{\frac{\lambda}{2}}\varepsilon^{\frac{\lambda}{2}}$$
for all $0<\delta<\varepsilon<1$. With this estimate at hand, one can apply the same argument in the proof of Theorem \ref{Ztbound} to conclude that, given any $\lambda\in(1,2)$, there exists a constant $C=C(\lambda)$ such that 
\begin{equation}\label{unifJ3}
\sup_{\varepsilon\in(0,1)}\bb E_{\mu^n}\Big[\big\lvert J_{n,3}^{h,\varepsilon}(t)\,-\,J_{n,3}^{h,\varepsilon}(s) \big\rvert^\lambda\Big]\,\leq\, C (t-s)^{\frac{3\lambda}{4}},
\end{equation}
for all $n\in\bb N$.

\subsection{Tightness of $\{\Gamma^{h}_n(t):t\in[0,T]\}_n$}\label{ssec43}

The tightness of $\{\Gamma^{h}_n(t):t\in[0,T]\}_n$ with respect to the uniform topology in space $D([0,T]; \bb R)$ is now easy based on our previous estimates. Again We use Kolmogorov-Centov criterion and the following theorem to prove.
\begin{theorem}\label{tightTau}
Fix $0\leq s < t \leq T$. Given any $\lambda\in(1,2)$,  there exists a constant $C = C( \lambda )$ such that
$$\bb E_{\mu^n}\Big[\big|\Gamma^{h}_n(t)-\Gamma^{h}_n(s)\big|^\lambda\Big]\,\leq\, C(t-s)^{\frac{3\lambda}{4}}$$
holds for all $n\in\bb N$. 
\end{theorem}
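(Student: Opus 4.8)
The plan is to bound $\Gamma_n(t)-\Gamma_n(s)$ by comparing it with the process $Z_{t,n}^\varepsilon$ from \eqref{Ztn}, for which we already have good control, and then optimizing over $\varepsilon$. Concretely, write
\begin{equation*}
\Gamma_n(t)-\Gamma_n(s)\,=\,\big(\Gamma_n(t)-\Gamma_n(s)-(Z_{t,n}^\varepsilon-Z_{s,n}^\varepsilon)\big)\,+\,\big(Z_{t,n}^\varepsilon-Z_{s,n}^\varepsilon\big),
\end{equation*}
so that by the elementary inequality $|a+b|^\lambda\leq 2^{\lambda-1}(|a|^\lambda+|b|^\lambda)$ it suffices to bound each of the two $L^\lambda$ norms. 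The first term is handled directly by the local replacement Lemma \ref{ZG}, which gives a bound of order $|t-s|^{\lambda/2}\varepsilon^{\lambda/2}$. For the second term, I would first pass from $Z_{t,n}^\varepsilon$ to its continuous counterpart $Z_t^\varepsilon$; however, since we only have $L^\lambda$ control of $Z_t^\varepsilon$ after the limit $n\to\infty$ (Theorem \ref{CLT}), it is cleaner to bound $\mathbb E_{\mu^n}[|Z_{t,n}^\varepsilon-Z_{s,n}^\varepsilon|^\lambda]$ directly using the same time-shift argument as in the proof of Theorem \ref{Ztbound}: by Jensen and the fact that the only input is the uniform entropy bound $H_n(r)\le C$ (Theorem \ref{entropy}), one gets $\mathbb E_{\mu^n}[|Z_{t,n}^\varepsilon-Z_{s,n}^\varepsilon|^\lambda]\le C|t-s|^\lambda\varepsilon^{-\lambda/2}$, arguing exactly as in Lemma \ref{Z} but with $s>0$ via the shift $\xi_{rn^2}(\cdot)=\eta_{(r+s)n^2}(\cdot)$.

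Combining the two estimates gives, for every $\varepsilon\in(0,1)$,
\begin{equation*}
\mathbb E_{\mu^n}\Big[\big|\Gamma_n(t)-\Gamma_n(s)\big|^\lambda\Big]\,\leq\, C\big(|t-s|^{\lambda/2}\varepsilon^{\lambda/2}\,+\,|t-s|^\lambda\varepsilon^{-\lambda/2}\big).
\end{equation*}
Now I optimize over $\varepsilon$. Treating the right-hand side as a function of $\varepsilon^{\lambda/2}$, it is minimized by balancing the two terms, i.e.\ choosing $\varepsilon^{\lambda/2}\sim |t-s|^{\lambda/4}$, equivalently $\varepsilon\sim\sqrt{|t-s|}$; this makes both terms of order $|t-s|^{3\lambda/4}$. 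One must only be slightly careful that $\varepsilon$ is required to lie in $(0,1)$: if $|t-s|\ge 1$ (which cannot happen here since $t,s\in[0,T]$ only forces $|t-s|\le T$, so one may need a harmless overall constant depending on $T$) or more to the point if the optimal $\varepsilon=\sqrt{|t-s|/T}$ would exceed $1$, one simply takes $\varepsilon$ close to $1$ and absorbs the loss into the constant $C=C(\varepsilon_0,\kappa,T,M,\lambda)$; since $|t-s|\le T$ this is automatic. This yields the claimed bound $\mathbb E_{\mu^n}[|\Gamma_n(t)-\Gamma_n(s)|^\lambda]\le C|t-s|^{3\lambda/4}$ uniformly in $n$.

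I do not expect a serious obstacle here: all the hard analytic work (the entropy estimate and, above all, the out-of-equilibrium local replacement Lemma \ref{ZG}) is already in place, and the present statement is essentially a bookkeeping argument combining them with a scaling optimization, entirely parallel to the proof of Theorem \ref{Ztbound}. The one point requiring a little care is the time-shift step: to reuse Lemma \ref{Z}-type bounds starting from time $s>0$ rather than $0$ one invokes that the estimates depend on the initial law only through the uniform entropy bound $H_n(r)=O(1)$, which holds at every time by Theorem \ref{entropy}, and then applies the argument to the shifted chain $\{\xi_{rn^2}=\eta_{(r+s)n^2}: 0\le r\le T-s\}$, exactly as in the Remark following Lemma \ref{ZG}.
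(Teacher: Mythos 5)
Your proposal is correct and follows essentially the same route as the paper: the same decomposition of $\Gamma_n(t)-\Gamma_n(s)$ into the replacement error plus the increment of $Z_{\cdot,n}^\varepsilon$, the first term controlled by Lemma \ref{ZG}, the second by the $C|t-s|^\lambda\varepsilon^{-\lambda/2}$ bound of Lemma \ref{Z} (which the paper invokes through Theorem \ref{Ztbound}), and the choice $\varepsilon\sim\sqrt{t-s}$ to balance the two. Note only that Lemma \ref{Z} is already stated for arbitrary $0\le s<t\le T$, so your time-shift precaution, while harmless, is not needed.
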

\begin{proof}
It is enough to prove the theorem only for the case $t-s<1$. Using triangle inequality, the expectation in the theorem is bounded by a constant $C(\lambda)$ times the sum 
\begin{equation*}
\begin{split}
&\bb E_{\mu^n}\Big[\big|Z_{t,n}^{h,\varepsilon}\,-\,Z_{s,n}^{h,\varepsilon}\,-\,\big(\widetilde{\Gamma}^{h,\varepsilon}_n(t)-\widetilde{\Gamma}^{h,\varepsilon}_n(s)\big)\big|^\lambda\Big]\\
+\,&\bb E_{\mu^n}\Big[\big|Z_{t,n}^{h,\varepsilon}\,-\,Z_{s,n}^{h,\varepsilon}\big|^\lambda\Big]\,+\, \bb E_{\mu^n}\Big[\big|J_{n}^{h,\varepsilon}(t)\,-\,J_{n}^{h,\varepsilon}(s)\big|^\lambda\Big] .
\end{split}
\end{equation*}
Choosing $\varepsilon=\sqrt{t-s}$ in Lemma \ref{ZG}, the first expectation is bounded by $C(t-s)^{\frac{3\lambda}{4}}$. The second expectation is also bounded by $C(t-s)^{\frac{3\lambda}{4}}$ by Theorem \ref{Ztbound}. The third expectation has the same upper bound as well because of \eqref{unifJ1},\eqref{unifJ2},\eqref{unifJ3}.
\end{proof}

\section{The limit}\label{sec5}
In this section we are going to prove Theorem \ref{limit} and \ref{time}.We start by showing that $X_t(f)$ is Gaussian assuming $X_0$ is a Gaussian random field.

Recall the definition of $\bb L_t$ given in \eqref{Lt}. Given $f\in C^\infty(\bb T)$ and $t\in[0,T]$, let $\{P_{s,t}f: 0\leq s\leq t\}$ be the solution of the backwards Fokker-Planck equation
\begin{equation*}
\begin{cases}
\partial_s v_s\,+\,\bb L_sv_s\,=\,0 \quad \text{for} \,\, s\leq t\\
v_t\,=\,f
\end{cases}
\end{equation*}
It follows from Theorem 5.1 of Chapter IV in \cite{lsu} that, if $f$ is smooth, then  $P_{s,t}f(\cdot)$ is a smooth function on $\bb T$ and $s\mapsto P_{\cdot, t}f(u)$ is smooth on $[0,t]$ for any $u\in \bb T$.

It is shown in section 6.3 of \cite{jm} that $X_t(f)$ can be represented as the sum of two independent variables:
$$X_t(f)\,=\, X_0(P_{0,t}f)\,+\,M_t(P_{\cdot,t}f)$$
for every $f\in C^\infty(\bb T)$. By the quadratic variation formula given in \eqref{qvar} and L\'evy's characterization theorem(see  Theorem II.4.4 of \cite{js}), $\{M_t(P_{\cdot,t}f): t\geq 0\}$ is a Gaussian process. Therefore, as long as $X_0$ is a Gaussian random field, then $\{X_t(f): t\geq0\}$ is a Gaussian process.

Finally we are in a position to prove Theorem \ref{limit} and \ref{time}.
\begin{proof}[Proof of Theorem \ref{limit}]
We have shown in subsection \ref{tightZt} that the sequence the processes $\{Z_t^{h,\varepsilon}: t\in [0,T]\}_{\varepsilon>0}$ is tight with respect to the uniform topology of $C([0,T]; \bb R)$. Suppose that $\{Z^h_t: t\in [0,T]\}$ is one of the limits. Fix any $t\in[0,T]$. By Lemma \ref{Zdif} and Theorem \ref{CLT}, taking $n\to\infty$, $\{Z_t^{h,\varepsilon}\}_{\varepsilon>0}$ is a Cauchy sequence in $L^\lambda(\bb P)$ for any $\lambda\in(1,2)$. Therefore $Z_t^{h,\varepsilon}$ converges to some limit as $\varepsilon\to0$ and this limit has to be $Z^h_t$. Since the law of a continuous process is determined by its finite dimensional distributions, this proves the uniqueness of the limit $\{Z^h_t: t\in [0,T]\}$ in $C([0,T];\bb R)$.

Since $\{X_t(f), t\geq 0\}$ is a Gaussian process, by definition of $Z_t^{h,\varepsilon}$, $\{Z_t^{h,\varepsilon}: t\in[0,T]\}$ is a Gaussian process. Since $Z_t^{h,\varepsilon}$ converges to $Z_t^h$ in $L^\lambda$ norm with $\lambda\in(1,2)$, $\{Z^h_t:t\in[0,T]\} $ is a Gaussian process as well.
\end{proof}

\begin{proof}[Proof of Theorem \ref{time}]
The tightness of $\{\Gamma^h_n(t):t\in[0,T]\}_{n\in\bb N}$ with respect to the uniform topology in space $D([0,T]; \bb R)$ was proved in subsection \ref{ssec43}. Let $\{\Gamma^h(t):t\in[0,T]\}$ be a limit point of $\{\Gamma^h_n(t):t\in[0,T]\}_n$. Recall that 
$$\Gamma_n^h(t)\,=\,\widetilde{\Gamma}_n^{h,\varepsilon}(t)\,+\, J_n^{h,\varepsilon}(t)$$
for every $\varepsilon\in(0,1)$. Since both $\{\Gamma^h_n(t):t\in[0,T]\}_{n\in\bb N}$ and $\{J^{h,\varepsilon}_n(t):t\in[0,T]\}_{n\in\bb N}$ are tight, $\{\widetilde{\Gamma}^h_n(t):t\in[0,T]\}_{n\in\bb N}$ is also tight. Moreover we have shown in subsection \ref{ssec42} that $J_n^{h,\varepsilon}$ vanishes as $n\to\infty$ for any $\varepsilon\in(0,1)$.  Therefore  $\{\Gamma^h(t):t\in[0,T]\}$ is also a limit point of $\{\widetilde{\Gamma}^{h,\varepsilon}_n(t):t\in[0,T]\}_n$ for any $\varepsilon\in(0,1)$.

 Without loss of generality, let us assume that $\{\Gamma^h(t):t\in[0,T]\}$  is defined in the same probability space on which the process $\{X_t: t\in[0,T]\}$ is defined. For any $\lambda\in (1,2)$, since $L^\lambda$ upper bounds are preserved by convergence in distribution,  by Lemma \ref{ZG} and Theorem \ref{CLT}, taking $n\to\infty$,
$$\bb E\Big[\big(\Gamma^h(t)\,-\,Z_t^{h,\varepsilon}\big)^\lambda\Big]\,\leq\, C t^{\frac{\lambda}{2}}\varepsilon^{\frac{\lambda}{2}}.$$
Sending $\varepsilon\to 0$, it follows that $\{\Gamma^h(t): t\in [0,T]\}$ has the same finite dimensional distributions as those of $\{Z^h_t: t\in [0,T]\}$ and we finish the proof.
\end{proof}

\section{Appendix}

In this appendix we discuss the subgaussian random variables and some properties of them. Most of the results presented here are taken from Appendix F of \cite{jm}, so we omit their proofs.  

\begin{definition}
We say that a real-valued random variable $X$ is subgaussian of order $\sigma^2$, if for every $\theta\in \bb R$,
$$\log E[e^{\theta X}]\,\leq\,\frac{1}{2}\sigma^2\theta^2.$$
\end{definition}

The next lemma, known as the Hoeffding's lemma, provides a class of examples of subgaussian random variables.

\begin{lemma}\label{Hf}
Let $X$ be a random variable taking values in $[0,1]$. Then for any $\theta\in\bb R$,
$$\log E\big[e^{\theta(X-E[X])}\big]\,\leq\, \frac{1}{8}\theta^2.$$
\end{lemma}

Recall that  $\rho_t(u)\in (\varepsilon_0,1-\varepsilon_0)$ for all $t\in[0,T]$ and all $u\in\bb T$. From this fact and the previous lemma, one can easily deduce that  
\begin{equation}\label{subgw}
w(x) \, \text{ is subgaussian of order}\,  (2/\varepsilon_0)^{-2}
\end{equation}

 The following lemma is a particular case of Lemma F.12 in \cite{jm}.
\begin{lemma}\label{subgsum}
Let $\{X_i;i\in \bb T_n\}$ be independent random variables. Assume that for any $i\in\bb T_n$, $X_i$ is subgaussian of order $\sigma_i^2$. Then for any $f:\bb T_n\to\bb R$, 
$$\sum_{i\in\bb T_n}f_iX_i \,\, \text{is subgaussian of order} \quad 2\sum_{i\in\bb T_n}\sigma_i^2 f_i^2. $$
\end{lemma}
We also need an estimate of the exponential moments of products of two subgaussian random variables.
\begin{lemma}[Lemma F.8 in \cite{jm}]\label{subgprod}
Let $X_i$ be the subgaussian variables of order $\sigma_i^2$, $i=1,2$. Then, for any $\gamma\leq (4\sigma_1\sigma_2)^{-1}$,
$$E[e^{\gamma X_1 X_2}]\,\leq\,3.$$
\end{lemma}

Finally let us prove a version of the well known Hoeffding's inequality that is needed in the proof of Lemma \ref{Z}.
\begin{lemma}\label{Hi}
Let $\{X_i; 1\leq i\leq n\}$ be a sequence of independent subgaussian random variables of order $\sigma_i^2$ respectively. Then for any $t>0$, 
$$P\Big(\Big\lvert \sum_{i=1}^n X_i\Big\rvert \geq t \Big)\,\leq\, 2\exp\Big\{-\frac{t^2}{4\sum_{i=1}^n\sigma_i^2}\Big\}.$$
\end{lemma}
\begin{proof}
Since 
$$P\Big(\Big\lvert \sum_{i=1}^n X_i\Big\rvert \geq t \Big)\,\leq\, P\Big( \sum_{i=1}^n X_i \geq t \Big)\,+\,P\Big( \sum_{i=1}^n -X_i\geq t \Big),$$
it is enough to show each probability at the right hand side of the above inequality is bounded by $\exp\Big\{-\frac{t^2}{4\sum_{i=1}^n\sigma_i^2}\Big\}$.We will just give the proof of 
$$P\Big( \sum_{i=1}^n X_i \geq t \Big)\,\leq\, \exp\Big\{-\frac{t^2}{4\sum_{i=1}^n\sigma_i^2}\Big\}.$$
The inequality for $-\sum_{i=1}^n X_i$ can be proved in the same way.

Since $\{X_i; i\geq 0\}$ are independent, by Lemma \ref{subgsum}, $ \sum_{i=1}^n X_i$ is subgaussian of order $2\sum_{i=1}^n\sigma_i^2$. Let $\lambda>0$ be a parameter that will be chosen later. Then
\begin{equation*}
\begin{split}
P\Big( \sum_{i=1}^n X_i \geq t \Big)\,\leq\,& e^{-\lambda t} E\Big[ e^{\lambda \sum_{i=1}^n X_i}\Big]\\
\leq\,& e^{-\lambda t} e^{\lambda^2 \sum_{i=1}^n\sigma_i^2}
\end{split}
\end{equation*}
Optimizing in $\lambda$ and thus choosing $\lambda= \frac{t}{2\sum_{i=1}^n\sigma_i^2}$, we conclude that 
$$P\Big( \sum_{i=1}^n X_i \geq t \Big)\,\leq\, \exp\Big\{-\frac{t^2}{4\sum_{i=1}^n\sigma_i^2}\Big\} .$$
\end{proof}

\smallskip\noindent{\bf Acknowledgments.} L.R.F. was partially supported by CNPq grant 307884/2019-8, and FAPESP grant 2017/10555-0. T. X. would like to thank the financial support of FAPESP grant 2019/02226-2.

\end{document}